\font\teneufm=eufm10
\font\seveneufm=eufm7
\font\fiveeufm=eufm5
\theoremstyle{definition}
\newtheorem{definition}{Definition}[section]
\theoremstyle{plain}
\newtheorem{lemma}[definition]{Lemma}
\newtheorem{prp}[definition]{Proposition}
\newtheorem{theorem}[definition]{Theorem}
\begin{document}

\title{On a variety of right-symmetric algebras}

\author{Nurlan Ismailov}

\address{Astana IT University,
Mangilik El avenue, 55/11, Business center EXPO, block C1,
Astana, 010000, Kazakhstan}

\email{nurlan.ismail@gmail.com}

\author{Ualbai Umirbaev}

\address{Department of Mathematics,
 Wayne State University, Detroit, MI 48202, USA; 
and Institute of Mathematics and Mathematical Modeling, Almaty, 050010, Kazakhstan}

\email{umirbaev@wayne.edu}

%\thanks{}

\maketitle

\begin{abstract}
We construct a finite-dimensional metabelian right-symmetric algebra over an arbitrary field that does not have a 
finite basis of identities.

\end{abstract}

\noindent {\bf Mathematics Subject Classification (2020):} {Primary 17D25, 17A50. Secondary  15A24, 16R10.}

\noindent

{\bf Key words:} right-symmetric algebras, identities, Specht property.

\section{\label{1}\ Inroduction.}

We say that a variety of algebras has {\em the Specht property} or is {\em Spechtian} if any of its subvarieties has a finite basis of identities. In other words, a variety of algebras is Spechtian if the set of all its subvarieties satisfies the descending chain condition with respect to inclusion. In 1950 Specht \cite{Specht1950} formulated a problem on the Specht property for the variety of all associative algebras over a field of characteristic zero.  

Specialists extended the study of this problem for any varieties of algebras over fields of any characteristic. 
In 1970 Vaughan-Lee \cite{Vaughan-Lee1970} constructed an example of a finite-dimensional Lie algebra over a field of characteristic  $p=2$ that does not have a finite basis of identities. In 1974 Drensky \cite{Drensky1974} extended this result to fields of any positive characteristic $p>0$.  In 1978  Medvedev \cite{Medvedev1978} showed that varieties of metabelian Malcev, Jordan, alternative, and $(-1,1)$ algebras are Spechtian. In 1984 Umirbaev \cite{Umirbaev1984} proved that the variety of metabelian binary Lie algebras over a field of characteristic $\neq 3$ has the Specht property. In 1980 Medvedev \cite{Medvedev1980} also constructed an example of a variety of solvable alternative algebras over a
field of characteristic $2$ with an infinite basis of identities. 
  In 1985 Umirbaev \cite{Umirbaev1985} proved that the varieties of solvable alternative algebras over a field of characteristic $\neq 2, 3$ have the Specht property. Pchelintsev \cite{Pchelintsev2000} constructed an almost Spechtian variety of alternative algebras over a field of characteristic 3. The Specht property of so-called bicommutative algebras is proven in \cite{Drensky-Zhakhaev2018}. 
  
In 1976 Belkin \cite{Belkin1976} proved that the variety of metabelian right-alternative algebras does not have the Specht property. In 1978 L'vov \cite{Lvov1978} constructed a six-dimensional nonassociative algebra over an arbitrary field satisfying the identity $x(yz)=0$ with an infinite basis of identities. 
In 1986 Isaev \cite{Isaev1986} adapted L'vov's methods for right-alternative algebras and constructed a finite-dimensional metabelian right-alternative algebra over an arbitrary field with an infinite basis of identities.  In 2008 Kuz'min \cite{Kuz'min2008} gave a sufficient condition for the varieties of metabelian right-alternative algebras over a field of characteristic $\neq 2$ to be Spechtian. 

In 1988 Kemer \cite{Kemer1988, Kemer1991} positively solved the famous Specht problem \cite{Specht1950} and proved that every variety of associative algebras over a field of characteristic zero has a finite basis of identities. Later the Specht problem was negatively solved for the variety of associative algebras over fields of positive characteristic $p>0$ \cite{Belov2000,Grishin1999,Shchigolev1999}. It is also known that the varieties of Lie algebras generated by a finite-dimensional algebra over a field of characteristic zero have the Specht property  \cite{Iltyakov1992,KSh1988}. 
Despite the efforts of many specialists in this field, the question of whether the variety of Lie algebras over a field of characteristic zero has the Specht property remains open.

This paper is devoted to the study of the Specht property for the variety of right-symmetric algebras. Recall that an algebra $A$ over a field $\mathbb{F}$ is called {\em right-symmetric} if it satisfies the  identity
\begin{equation}\label{rsym}(a,b,c)=(a,c,b), 
\end{equation}
where $(a,b,c)=(a b) c-a (b c)$ is the associator of $a,b,c\in A$. 

Right-symmetric algebras are Lie admissible, that is, any right-symmetric algebra with respect to the commutator $[x,y]=xy-yx$ is a Lie algebra.  Very often right-symmetric (or left-symmetric) algebras are called pre-Lie algebras and play an important role in the theory of operads \cite{Loday}. Right-symmetric algebras arise in many different areas of mathematics and physics \cite{Burde2006}.

In 1994 Segal \cite{Segal1994} constructed a basis of free right-symmetric algebras. Chapoton and Livernet \cite{ChapotonLivernet2001} and, independently,  L\"ofwall and Dzhumadil'daev \cite{Dzhum-Lof2002} gave other bases of free right-symmetric algebras in terms of rooted trees. The identities of right-symmetric algebras were
studied by Filippov \cite{Fil01}, and he proved that any right-nil
right-symmetric algebra over a field of characteristic zero is right 
nilpotent.  An analogue of the PBW basis Theorem for the universal
(multiplicative) enveloping algebra of a right-symmetric algebra was given in
\cite{KU2004}. The Freiheitssatz and the decidability of the word
problem for one-relator right-symmetric algebras were proven in \cite{KMU2008}.
 Recently, Dotsenko and Umirbaev \cite{DotsenkoUmirbaev2023} determined that the variety of right-symmetric algebras over a field of characteristic zero is Nielson-Schreier, that is, every subalgebra of a free right-symmetric algebra is free.    

   A right-symmetric algebra with an additional identity 
$$
a (b c)=b (a c)
$$ is called a Novikov algebra. The class of Novikov algebras is an important and well-studied subclass of right-symmetric algebras. Recently there was great progress in the study of identities, solvability, and nilpotency \cite{Zel,Fil01,DzT,ShZh,UZh21,TUZ21}. In 2022 Dotsenko, Ismailov, and Umirbaev \cite{DIU2023} proved that (a) every Novikov algebra satisfying a nontrivial polynomial identity over a field of characteristic zero is right-associator nilpotent and (b) the variety of Novikov algebras over a field of characteristic zero has the Specht property.

In this paper, we continue the study of the identities of right-symmetric algebras. Namely, using the constructions and methods of L'vov \cite{Lvov1978} and Isaev \cite{Isaev1986}, we construct a finite-dimensional metabelian right-symmetric algebra over an arbitrary field that does not have a finite basis of identities. In fact, our algebra belongs to the variety of algebras $\mathcal{R}$ defined by the identities 

\begin{equation}\label{def0} [[a,b],c]=0,
\end{equation}
\begin{equation}\label{def1} (a b) a=0,
\end{equation}
and
\begin{equation}\label{def2}(a b) (c d)=0.
\end{equation}

We determine some identities and operator identities of the variety $\mathcal{R}$ in Section 2.  In Section 3, a series of algebras $P_n$ of this variety is constructed. A linear basis of free algebras of the variety $\mathcal{R}$ is constructed in Section 4. Section 5 is devoted to the study of the relationships between the polynomial identities and the operator identities of the algebras $P_n$. The main result of the paper is given in Section 6 and says that the algebra $P_2$ does not have a finite basis of identities.

\section{\label{2}\ A variety of right-symmetric algebras}

Let $\mathbb{F}$ be an arbitrary fixed field. In what follows, all vector spaces are considered over $\mathbb{F}$. As above, $\mathcal{R}$ denotes the variety of algebras defined by the identities (\ref{def0}), (\ref{def1}), and (\ref{def2}).

\begin{lemma}\label{u1} Every algebra of the variety $\mathcal{R}$ is right-symmetric and right nilpotent of index $4$.  
\end{lemma}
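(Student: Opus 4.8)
The plan is to reduce both assertions to the partial linearization of (\ref{def1}). Substituting $a \mapsto a+c$ in $(ab)a = 0$ and cancelling the terms $(ab)a$ and $(cb)c$, which vanish by (\ref{def1}), yields
\[
(ab)c + (cb)a = 0 ;
\]
this manipulation uses no division and so is valid over a field of arbitrary characteristic. Both parts of the lemma will then follow from this identity together with (\ref{def0}) and (\ref{def2}) by short formal computations, so right-symmetry is not assumed a priori.

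For right-symmetry, expand $(a,b,c) - (a,c,b) = (ab)c - (ac)b - a(bc) + a(cb)$ and rewrite the first two summands by the linearized identity: $(ab)c = -(cb)a$ and $(ac)b = -(bc)a$, whence $(ab)c - (ac)b = ((bc)-(cb))a = [b,c]\,a$, while $-a(bc) + a(cb) = -a\,[b,c]$. Therefore $(a,b,c) - (a,c,b) = [b,c]\,a - a\,[b,c] = [[b,c],a]$, which is $0$ by (\ref{def0}); hence (\ref{rsym}) holds in every algebra of $\mathcal{R}$.

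For right nilpotency, recall $A^{\langle 1\rangle} = A$ and $A^{\langle k+1\rangle} = A^{\langle k\rangle}A$, so that $A^{\langle 4\rangle}$ is spanned by the products $((ab)c)d$. Applying the linearized identity with $ab,\, c,\, d$ in place of $a,\, b,\, c$ gives $((ab)c)d = -(dc)(ab)$, which is $0$ by (\ref{def2}); thus $A^{\langle 4\rangle} = 0$, i.e. $A$ satisfies the identity $((xy)z)w = 0$. To see that the index equals $4$ and not less, one should exhibit an algebra of $\mathcal{R}$ with $A^{\langle 3\rangle}\neq 0$ --- either a small explicit example, or the algebras $P_n$ of Section~3, in which products $(ab)c$ are in general nonzero.

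The argument is essentially formal, and I expect no real obstacle beyond two bookkeeping points: performing the linearization of (\ref{def1}) without dividing by $2$, so that an arbitrary base field is admissible, and supplying (or forward-referencing) an algebra in $\mathcal{R}$ witnessing that right nilpotency does not already occur at index $3$.
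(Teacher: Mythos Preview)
Your argument is correct and essentially identical to the paper's: linearize (\ref{def1}) to obtain $(ab)c+(cb)a=0$, then derive right nilpotency via $((ab)c)d=-(dc)(ab)=0$ from (\ref{def2}), and right-symmetry by rewriting $(a,b,c)-(a,c,b)$ as $[[b,c],a]$ using (\ref{def0}). The only difference is that the paper does not attempt to show the index is exactly~$4$; it reads ``index $4$'' as ``satisfies $((ab)c)d=0$'', so your final paragraph on sharpness is an addition rather than part of the intended statement.
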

\begin{proof} The linearization of (\ref{def1}) gives 
\begin{equation}\label{multi}(a b) c+(c b) a=0.
\end{equation}
This identity and (\ref{def2}) imply that 
\begin{equation}\label{def3}((a b) c) d=-(d c) (a b)=0. 
\end{equation}
 Using (\ref{def1}) and(\ref{def0}) one can also get 
$$(a,b,c)-(a,c,b)=(ab)c-a(bc)-(ac)b+a(cb)$$
$$=-(cb)a-a(bc)+(bc)a+a(cb)$$
$$=[b,c]a-a[b,c]=[[b,c],a]=0,$$
i.e., $\mathcal{R}$ is a variety of right-symmetric  algebras. 
\end{proof}

Let $A$ be an arbitrary algebra of the variety $\mathcal{R}$. Recall that for any $x\in A$ the operators of right multiplication $R_x$ and left multiplication $L_x$ on  $A$ are defined by 
\[aR_x=ax \quad \text{and} \quad  aL_x=xa,\] 
respectively.   Set also $V_{x,y}=L_{x}R_{y}$.  
\begin{lemma}\label{relations}
\begin{equation}\label{rel1}
V_{x,x}=0, \quad \quad  V_{x,y}=-V_{y,x}.
\end{equation}
\begin{equation}\label{rel2}
xR_{y}L_{z}L_{t}=yV_{x,z}L_{t}-xR_{y}V_{t,z}.
\end{equation}
\begin{equation}\label{rel3}
xR_{y}L_{z}=xV_{z,y}+yR_{x}L_{z}-yV_{z,x}.
\end{equation}
\begin{equation}\label{rel4}
xR_{y}V_{z,t}=yR_{x}V_{z,t}.
\end{equation}
\begin{equation}\label{rel5}
V_{x,y}R_z=0.
\end{equation}
\begin{equation}\label{rel6}
V_{x,y}(L_zL_t+V_{t,z})=0.
\end{equation}
\end{lemma}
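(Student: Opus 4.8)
The plan is to verify each of (\ref{rel1})--(\ref{rel6}) by rewriting both sides as explicit, non-associatively bracketed products of the relevant generators and then collapsing terms using the defining identities (\ref{def0})--(\ref{def2}), their consequences (\ref{multi}) and (\ref{def3}), and right-symmetry, all of which come from Lemma~\ref{u1}. For the operator identities (\ref{rel1}), (\ref{rel5}), (\ref{rel6}) I evaluate both sides on an arbitrary element $a$; the other three are already element identities in $x,y,z,t$. The first two are immediate: $aV_{x,x}=(xa)x=0$ by (\ref{def1}), so $V_{x,x}=0$, and then replacing $x$ by $x+y$ and using that $V_{x,y}$ is bilinear in $(x,y)$ gives $V_{x,y}+V_{y,x}=0$, which proves (\ref{rel1}) over any field since no division is used; and $aV_{x,y}R_z=((xa)y)z=0$ is a direct instance of (\ref{def3}), which proves (\ref{rel5}).

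For (\ref{rel3}) and (\ref{rel4}) the key point is that, by (\ref{def0}), the commutator $[x,y]=xy-yx$ is central: $z[x,y]=[x,y]z$. Written out, (\ref{rel3}) reads $z(xy)=(zx)y+z(yx)-(zy)x$; moving $z(yx)$ to the left turns the left-hand side into $z[x,y]$, while on the right (\ref{multi}) in the form $(ab)c=-(cb)a$ rewrites $(zx)y-(zy)x$ as $(xy)z-(yx)z=[x,y]z$, so the identity reduces to exactly $z[x,y]=[x,y]z$. For (\ref{rel4}), the difference of the two sides equals $(z(xy))t-(z(yx))t=(z[x,y])t=([x,y]z)t=((xy)z)t-((yx)z)t$, and both summands vanish by (\ref{def3}).

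The computational core is (\ref{rel2}). Written out, it asserts $t(z(xy))=t((xy)z)-(t(xy))z$. The right-hand side equals $-(t,xy,z)$, which by right-symmetry equals $-(t,z,xy)=-(tz)(xy)+t(z(xy))$; since $(tz)(xy)=0$ by (\ref{def2}), this is precisely $t(z(xy))$. Finally, (\ref{rel6}) follows from (\ref{rel2}) by substituting the element $xa$ for $x$: this gives $(xa)R_yL_zL_t=yV_{xa,z}L_t-(xa)R_yV_{t,z}$, and $yV_{xa,z}=((xa)y)z=0$ by (\ref{def3}), so $aV_{x,y}L_zL_t=(xa)R_yL_zL_t=-(xa)R_yV_{t,z}=-aV_{x,y}V_{t,z}$, which is (\ref{rel6}).

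I expect the only genuine obstacle to be bookkeeping: each step is a one-line manipulation, but one must keep careful track of which defining identity (or which of its linearized or derived forms) annihilates each product, and in (\ref{rel2})---and hence in (\ref{rel6})---the right-symmetric rearrangement of the associator is the crucial move, since it is what converts a product with the ``wrong'' bracketing into one to which (\ref{def2}) or (\ref{def3}) applies.
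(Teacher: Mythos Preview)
Your proof is correct and follows essentially the same approach as the paper: each identity is verified by unwinding the operator notation to bracketed products and collapsing terms via (\ref{def0})--(\ref{def3}), (\ref{multi}), and right-symmetry. The one noteworthy difference is (\ref{rel6}): the paper proves it by a direct computation on $v=uV_{x,y}$ (using $vz=0$ and right-symmetry to reduce $t(zv)$ to $-(tv)z$), whereas you obtain it as a corollary of (\ref{rel2}) via the substitution $x\mapsto xa$, which is a slightly more economical route; similarly, for (\ref{rel4}) the paper expands $(z(xy))t$ by right-symmetry while you use the centrality of $[x,y]$ from (\ref{def0})---both are one-line arguments with the same ingredients.
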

\begin{proof} The identities (\ref{def1}) and (\ref{multi}) immediately imply  (\ref{rel1}). 
By (\ref{rsym}) and (\ref{def2}) we get
$$xR_{y}L_{z}L_{t}=t(z(xy))$$
$$=(tz)(xy)+t((xy)z)-(t(xy))z=yV_{x,z}L_{t}-xR_{y}V_{t,z}.$$
From the identity (\ref{def0}) follows (\ref{rel3}).

Then (\ref{rsym}) and (\ref{def3}) give that 
$$xR_{y}V_{z,t}=(z(xy))t$$
$$=((zx)y)t+(z(yx))t-((zy)x)t=yR_{x}V_{z,t}.$$
By (\ref{def3}) we obtain $tV_{x,z}R_t=((xt)z)t=0$, and, therefore,  $V_{x,y}R_z=0$.
Set $v=uV_{x,y}$. Then (\ref{def3}), (\ref{rsym}), and  (\ref{def2}) imply that 
$$vL_zL_t=t(zv)-t(vz)=(tz)v-(tv)z=-vV_{t,z}.$$
\end{proof}

\section{\label{2}\ Algebras $P_n$}

For each natural $n$ we define the algebra $P_n$ with a linear basis 
\[a_{ij}, \, b_{ij}, \,c_i, \, d_{ij}, \, e_{ij},\]
where $i,j\in\{1,2,\ldots,n\}$, and with the product defined by  
\[a_{ij}c_i=d_{ij}, \quad b_{ij}c_i=e_{ij},\]
\[a_{ij}e_{ij}=e_{ij}a_{ij}=-b_{ij}d_{ij}=-d_{ij}b_{ij}=c_j,\]
where all zero products are omitted.

Set 
$$A_n=\mathrm{Span}\{a_{ij},\, b_{ij} \, |\,  1\leq i,j\leq n\}$$ 
and 
$$D_n=\mathrm{Span}\{c_i,\, d_{ij},\, e_{ij} \, | \, 1\leq i,j \leq n\},$$
where $\mathrm{Span}\, X$ denotes the linear span of $X$. 
Then $A_n$ is a subalgebra of $P_n$ and $D_n$ is an ideal of $P_n$.  Moreover, $P_n$ is a direct sum of the vector spaces $A_n$ and $D_n$. 
Set also 
$$C_n=\mathrm{Span}\{c_i \,| \,1\leq i \leq n\}, \ \ \overline{C}_n=\mathrm{Span}\{d_{ij},\, e_{ij} \,|\, 1\leq i,j\leq n\}. $$ 

Then 
\begin{equation}\label{main reltions}
P_n^2=D_n, \quad A_n^2=D_n^2=0, \quad D_n=C_n\oplus\overline{C}_n,
\end{equation}
\[D_nP_n=C_n, \quad P_nA_n=C_n,\quad P_nC_n=\overline{C}_n, \quad 
C_nP_n=0,\quad  P_n\overline{C}_n=C_n.\]

\begin{lemma}\label{lemma1} The algebra $P_n$ belongs to the variety $\mathcal{R}$.
\end{lemma}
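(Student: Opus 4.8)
The plan is to verify that $P_n$ satisfies each of the three defining identities (\ref{def0}), (\ref{def1}), (\ref{def2}) of $\mathcal{R}$. Since all three are multilinear (after linearizing, in the case of (\ref{def1})), it suffices to check them on basis elements. The key structural observation, recorded in (\ref{main reltions}), is that $P_n$ is spanned by the ``degree-one'' part $A_n$ and the ``degree $\geq 2$'' part $D_n=P_n^2$, that $A_n^2=0$ and $D_n^2=0$, and that the products only move one up the filtration $A_n \rightsquigarrow C_n \rightsquigarrow \overline{C}_n$ (with $\overline{C}_n$ mapped back into $C_n$ and $C_n$ killed on the right). I would first make explicit the grading: assign $a_{ij},b_{ij}$ weight $1$, the $c_i$ weight $2$ (note $c_j = a_{ij}e_{ij}$ has the same weight as $c_i$, so really all $c$'s sit in one graded piece), $d_{ij},e_{ij}$ weight $2$ as well — more precisely it is cleaner to filter than to grade, so I would work with the filtration $P_n \supseteq D_n \supseteq \overline{C}_n \supseteq 0$ together with $C_n$, and note every product of two homogeneous-in-filtration elements lands strictly deeper unless one factor is in $A_n$.

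For the metabelian identity (\ref{def0}), $[[a,b],c]=0$: since $A_n$ is commutative ($A_n^2=0$) and $D_n$ is an ideal with $D_n^2=0$, any commutator $[a,b]$ of basis elements lies in $D_n$ (the only nonzero products involve at least one factor from $D_n$ or land in $D_n$), and in fact one checks $[a,b]\in C_n+\overline{C}_n=D_n$; then $[[a,b],c]$ involves multiplying an element of $D_n$ by something, and using the relations $C_nP_n=0$, $D_nP_n=C_n$, $P_nD_n \subseteq C_n$ one sees the two terms $[a,b]c$ and $c[a,b]$ either both vanish or cancel. Concretely I would reduce to the cases where $[a,b]\in C_n$ (then $[a,b]c = 0$ since $C_nP_n=0$, and $c[a,b]\in P_nC_n=\overline{C}_n$ but one checks it is actually $0$ on basis elements — the only nonzero $P_nC_n$ products are $a_{ij}c_i=d_{ij}$, $b_{ij}c_i=e_{ij}$, which are not symmetric enough to survive commutation), and where $[a,b]\in\overline{C}_n$ (then $[a,b]c\in\overline{C}_n P_n \subseteq D_nP_n=C_n$, $c[a,b]\in P_n\overline{C}_n=C_n$, and the symmetry $a_{ij}e_{ij}=e_{ij}a_{ij}$ and $b_{ij}d_{ij}=d_{ij}b_{ij}$ built into the multiplication table forces the commutator to vanish). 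For (\ref{def1}), $(ab)a=0$: write $a=a'+a''$ with $a'\in A_n$, $a''\in D_n$; the term $(ab)a$ already has $ab\in P_n^2=D_n$, so $(ab)a \in D_nP_n=C_n$, and expanding, the only way to get a nonzero triple product $(xy)z$ with $z=x$ is ruled out because the nonzero products $a_{ij}e_{ij}=c_j$ etc. require the two factors to be the specific complementary pair $\{a_{ij},e_{ij}\}$ or $\{b_{ij},d_{ij}\}$, never $\{x,x\}$ — so $(ab)a=0$ basis-elementwise and hence identically. For (\ref{def2}), $(ab)(cd)=0$: both $ab$ and $cd$ lie in $D_n=P_n^2$, and $D_n^2=0$ by (\ref{main reltions}), so this is immediate.

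I expect the main obstacle to be (\ref{def0}): it is the only identity of the three that does not follow in one line from the $A_n^2=D_n^2=0$ observation, and a genuine (if short) case analysis on which filtration piece $[a,b]$ lands in is needed, together with checking that the asymmetric products $a_{ij}c_i=d_{ij}$ and the sign-patterned products $a_{ij}e_{ij}=-b_{ij}d_{ij}=c_j$ conspire to make every double commutator vanish. The cleanest packaging is probably to observe first that for basis elements $x,y$ the product $xy$ is either $0$ or a single basis vector, and that $[x,y]\in D_n$ always; then split $[x,y]$ according to $C_n$ versus $\overline{C}_n$ and in each case invoke the relevant line of (\ref{main reltions}) to kill or cancel $[[x,y],z]$. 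Once (\ref{def0}) is done, (\ref{def1}) is a one-paragraph ``no repeated-factor product is nonzero'' argument and (\ref{def2}) is immediate from $D_n^2=0$, so $P_n\in\mathcal{R}$ follows.
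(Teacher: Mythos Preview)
Your overall approach---verify each defining identity of $\mathcal{R}$ on basis elements using the structural relations (\ref{main reltions})---is exactly the paper's approach, and your treatment of (\ref{def2}) is identical. However, two of your three verifications have gaps or are more complicated than necessary.

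For (\ref{def0}), the paper's argument is a single line: one computes directly from the multiplication table that $[P_n,P_n]=\overline{C}_n$ (the only nonzero commutators of basis elements are $[a_{ij},c_i]=d_{ij}$ and $[b_{ij},c_i]=e_{ij}$, since the products involving $e_{ij}$ or $d_{ij}$ are already symmetric), and $\overline{C}_n$ is central precisely because of those built-in symmetries $a_{ij}e_{ij}=e_{ij}a_{ij}$, $b_{ij}d_{ij}=d_{ij}b_{ij}$. Your case split into ``$[a,b]\in C_n$'' versus ``$[a,b]\in\overline{C}_n$'' is unnecessary because the first case is vacuous---no nonzero commutator lands in $C_n$. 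Worse, your reasoning in that vacuous case is incorrect: if $[a,b]$ \emph{were} some $c_i$, then $c\cdot[a,b]$ need not vanish (e.g.\ $a_{ij}\cdot c_i=d_{ij}\neq 0$), contrary to what you assert. This does not affect the final conclusion only because the case never occurs.

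For (\ref{def1}), there is a genuine gap. You correctly note at the outset that one must linearize, but your detailed argument then only checks $(xy)x=0$ for basis elements $x,y$. Over an arbitrary field (in particular in characteristic $2$) this is insufficient: writing a general $a$ as a linear combination of basis vectors, the cross terms in $(ab)a$ are governed by the linearized identity $(xb)z+(zb)x=0$, which you never verify. The paper handles this explicitly: it observes that the only nonzero triple products $(xy)z$ with basis $x,y,z$ are $(a_{ij}c_i)b_{ij}=-c_j$ and $(b_{ij}c_i)a_{ij}=c_j$, and these two are swaps of each other with opposite sign, so $(xy)z+(zy)x=0$ holds. You need to add this check.
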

\begin{proof} Obviously the space of commutators $[P_n,P_n]$ coincides with  $\overline{C}_n$, which is in the center of $P_n$, i.e., (\ref{def0}) holds.

In order to verify the identity (\ref{def1}), it is sufficient to check the identities  (\ref{def1}) and (\ref{multi}) for all elements of the basis of $P_n$. 
Let us begin with (\ref{def1}). Since $A_n^2=D_n^2=(D_nA_n)D_n=0$, we may assume that $a\in A_n$ and $b\in D_n$. Consider all nonzero products of the space $A_nD_n$. If $a=a_{ij}$ and $b=c_i$, then
$$(a_{ij}c_i)a_{ij}=d_{ij}a_{ij}=0.$$ If $a=a_{ij}$ and $b=e_{ij}$, then
$$(a_{ij}e_{ij})a_{ij}=c_{j}a_{ij}=0.$$
The other cases can be verified similarly. 

Now let's verify (\ref{multi}).
Since $(D_nP_n)P_n=0$, the product $(ab)c$ is nonzero only if $a=a_{ij},\, b=c_i, \,c=b_{ij}$ or  $a=b_{ij}, \,b=c_i, \,c=a_{ij}$. Thus,
\[(ab)c+(cb)a=-c_j+c_j=0.\]
 The identity (\ref{def2}) follows from the relations $P^2_n=D_n$ and $D^2_n=0$.   
\end{proof}

\begin{lemma}\label{V-action} For all  $x,y\in P_n$, $d\in D_n$ we have
$$(A_n+\overline{C}_n)V_{x,y}=0, \quad V_{d,y}=V_{y,d}=0. $$
\end{lemma}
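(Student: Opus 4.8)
The plan is to reduce everything to the multiplication rules recorded in (\ref{main reltions}): once we recall that $uV_{x,y}=(xu)y$, each of the three assertions becomes a one-line chase showing that the relevant product lands in a subspace that is annihilated by the next multiplication. No new idea is needed beyond keeping track of which of the summands $C_n$, $\overline{C}_n$, $A_n$, $D_n$ a given product belongs to.

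For the first assertion, $(A_n+\overline{C}_n)V_{x,y}=0$, I would take $u=u_1+u_2$ with $u_1\in A_n$ and $u_2\in\overline{C}_n$. Then $xu_1\in P_nA_n=C_n$ and $xu_2\in P_n\overline{C}_n=C_n$, so $xu\in C_n$; applying $R_y$ and using $C_nP_n=0$ gives $uV_{x,y}=(xu)y=0$. For $V_{d,y}$ with $d\in D_n$: for every $u\in P_n$ one has $du\in D_nP_n=C_n$, hence $uV_{d,y}=(du)y\in C_nP_n=0$, so the operator $V_{d,y}$ vanishes. For $V_{y,d}$ with $d\in D_n$: for every $u\in P_n$ one has $yu\in P_n^2=D_n$, hence $uV_{y,d}=(yu)d\in D_n^2=0$, so $V_{y,d}=0$ as well.

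There is essentially no obstacle here: the argument is pure bookkeeping, and the only point requiring care is that every inclusion invoked, namely $P_nA_n\subseteq C_n$, $P_n\overline{C}_n\subseteq C_n$, $D_nP_n\subseteq C_n$, $C_nP_n=0$, $P_n^2\subseteq D_n$, and $D_n^2=0$, is one of the relations already established right after the definition of $P_n$. If one preferred not to cite (\ref{main reltions}) wholesale, each of these inclusions can instead be verified directly on the basis $a_{ij},b_{ij},c_i,d_{ij},e_{ij}$ from the explicit product table, which is routine.
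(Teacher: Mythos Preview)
Your proof is correct and follows essentially the same approach as the paper: both arguments simply chase the inclusions $P_nA_n\subseteq C_n$, $P_n\overline{C}_n\subseteq C_n$, $D_nP_n\subseteq C_n$, $C_nP_n=0$, $P_n^2\subseteq D_n$, and $D_n^2=0$ from (\ref{main reltions}) through the definition $uV_{x,y}=(xu)y$. The only difference is presentational---you unpack the computation element by element, while the paper compresses each case into a single inclusion such as $(P_nA_n)P_n\subseteq C_nP_n=0$.
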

\begin{proof} The relations (\ref{main reltions}) give that $(P_nA_n)P_n\subseteq C_n P_n=0$ and $(P_n\overline{C}_n)P_n\subseteq C_n P_n=0$, i.e., the first equality of the lemma holds. Similarly, by noting that $(D_nP_n)P_n\subseteq C_n P_n=0$ and $(P_nP_n)D_n\subseteq D_n D_n=0$, we can deduce the second equality of the lemma. 
\end{proof}

Denote by $\mathrm{Ann}_lP_n$ the space of left annihilators of $P_n$.

\begin{lemma}\label{lemma30} $\mathrm{Ann}_lP_n=C_n$.  
\end{lemma}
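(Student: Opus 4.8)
The plan is to establish the two inclusions separately, with the nontrivial direction handled by a short bookkeeping computation against the sparse multiplication table of $P_n$. For $C_n\subseteq\mathrm{Ann}_lP_n$, I would simply invoke the relation $C_nP_n=0$ recorded in (\ref{main reltions}). For the reverse inclusion, take an arbitrary $x\in\mathrm{Ann}_lP_n$ and write it in the given basis as
\[
x=\sum_{i,j}\alpha_{ij}a_{ij}+\sum_{i,j}\beta_{ij}b_{ij}+\sum_i\gamma_i c_i+\sum_{i,j}\delta_{ij}d_{ij}+\sum_{i,j}\epsilon_{ij}e_{ij};
\]
the goal is to force $\alpha_{ij}=\beta_{ij}=\delta_{ij}=\epsilon_{ij}=0$ by testing the condition $xP_n=0$ on suitable basis elements.

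First I would multiply $x$ on the right by $c_k$. The elements $c_i,d_{ij},e_{ij}$ all lie in $D_n$, so by $D_nD_n=0$ (equivalently, because $c_k\in C_n$ and none of the products $c_i c_k,\ d_{ij}c_k,\ e_{ij}c_k$ appears in the defining list) the only surviving contributions come from $a_{ij}c_i=d_{ij}$ and $b_{ij}c_i=e_{ij}$. Hence $xc_k=\sum_j\alpha_{kj}d_{kj}+\sum_j\beta_{kj}e_{kj}$, and since the $d_{kj},e_{kj}$ are linearly independent basis vectors, $xc_k=0$ gives $\alpha_{kj}=\beta_{kj}=0$ for all $j$. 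Letting $k$ vary, we conclude $x\in D_n$. Next, with $x=\sum_i\gamma_i c_i+\sum_{i,j}\delta_{ij}d_{ij}+\sum_{i,j}\epsilon_{ij}e_{ij}$, I would multiply on the right by $a_{kl}$ and by $b_{kl}$. The only defining products whose right factor lies in $A_n$ are $e_{ij}a_{ij}=c_j$ and $d_{ij}b_{ij}=-c_j$ (and $c_iP_n=0$), so $xa_{kl}=\epsilon_{kl}c_l$ and $xb_{kl}=-\delta_{kl}c_l$. These vanish exactly when $\epsilon_{kl}=\delta_{kl}=0$, so after ranging over all $k,l$ we are left with $x=\sum_i\gamma_i c_i\in C_n$, as desired.

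There is no serious obstacle here: the argument is a direct computation using the short multiplication table and the summarized relations (\ref{main reltions}). The only point requiring care is matching indices correctly—for instance, $a_{ij}c_k$ is nonzero only when $k=i$, and $e_{ij}a_{kl}$ is nonzero only when $(i,j)=(k,l)$—and checking that every product not on the defining list is genuinely zero, which the relations $A_n^2=D_n^2=0$, $C_nP_n=0$, and $(D_nP_n)P_n=0$ guarantee at once.
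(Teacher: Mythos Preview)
Your proof is correct and follows essentially the same approach as the paper: the paper also writes a general element in the basis, computes $xc_i$, $xa_{ij}$, and $xb_{ij}$, and reads off the vanishing of all non-$C_n$ coefficients. The only differences are cosmetic (you split the computation into two stages and label the coefficients differently), so there is nothing to add.
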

\begin{proof}
Assume that $x\in (A_n+\overline{C}_n+C_n)\cap \mathrm{Ann}_lP_n$ and express it as 
$$x=\sum_{i,j}(\alpha_{ij}a_{ij}+\beta_{ij}b_{ij}+\gamma_{ij}d_{ij}+\delta_{ij}e_{ij}+\epsilon_ic_i),$$
where $\alpha_{ij}, \beta_{ij}, \gamma_{ij}, \delta_{ij}, \epsilon_i\in\mathbb{F}$. Then we have
$$xc_i=\sum_{j}(\alpha_{ij}d_{ij}+\beta_{ij}e_{ij}), \quad xa_{ij}=\delta_{ij}c_j, \quad xb_{ij}=-\gamma_{ij}c_j.$$ From these equations, it can be deduced that  $\alpha_{ij}=\beta_{ij}=\gamma_{ij}=\delta_{ij}=0$. Therefore, we can conclude that  $x=\sum_i\epsilon_ic_i$.  Consequently, $\mathrm{Ann}_lP_n=C_n$. 
\end{proof}

\section{\label{1}\ Structure of free algebras of  $\mathcal{R}$}

Let $F(X)$ be the free algebra of the variety $\mathcal{R}$ generated by an infinite countable set $X=\{x_1,x_2,\ldots, x_n,\ldots\}$. 
\begin{prp}\label{basis}
The set of elements $\mathcal{B}$  of $F(X)$ of the forms
\begin{center}
$x_i$, \quad $x_i\Hat{R}_{x_j}L_{x_s}$,  \quad  $x_i\Hat{R}_{x_j}V_{x_{p_1},x_{q_1}}\cdots V_{x_{p_k},x_{q_k}}\Hat{L}_{x_s},$
\end{center}
 where $i<j$ and $p_r<q_r$ for all $r=1,2,\ldots, k$, $k\geq 1$, and $\Hat{T}_x$ denotes that the operator $T_x$ might not occur, is a basis of $F(X)$.     
\end{prp}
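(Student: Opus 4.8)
The argument splits into a spanning statement and a linear-independence statement, which I would treat in that order.

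\emph{Spanning.} I would prove by induction on the degree of a monomial $w\in F(X)$ that $w$ is a linear combination of elements of $\mathcal B$. The cases of degree $\le 3$ are checked directly: a product of degree $2$ is $x_ix_j=x_jL_{x_i}\in\mathcal B$; a left-normed product $(x_ax_b)x_c$ is $0$ when $a=c$ by (\ref{def1}) and otherwise, by (\ref{multi}), can be brought to the shape $x_bV_{x_a,x_c}$ (swapping $a$ and $c$ first if the index condition requires it); and $x_a(x_bx_c)=x_bR_{x_c}L_{x_a}$ is reduced by right-symmetry, i.e. by (\ref{rel3}), to a combination of left-normed products and of elements $x_iR_{x_j}L_{x_s}$. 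For $w$ of degree $n\ge 4$, write $w=uv$ with $1\le\deg u,\deg v<n$; if $\deg u\ge 2$ and $\deg v\ge 2$ then $w\in F(X)^2F(X)^2=0$ by (\ref{def2}), so one of $u,v$ is a generator and $w=u'R_{x_s}$ or $w=x_sv'=v'L_{x_s}$ with $u',v'$ of degree $<n$. By induction $u',v'$ are linear combinations of elements of $\mathcal B$, so it remains to verify that $bR_{x_s}$ and $bL_{x_s}$ are again such combinations for every $b\in\mathcal B$. This is a finite case analysis over the three shapes of $\mathcal B$-elements, and the relations of Lemma \ref{relations} are exactly what makes it close up: $L_{x_m}R_{x_s}=V_{x_m,x_s}$ turns a trailing $L$ into a $V$; (\ref{rel5}) kills any word ending in $V\cdots R$; (\ref{rel6}) lets one replace a trailing $V\cdots L_zL_t$ by $-V\cdots V_{t,z}$; (\ref{rel2}) and (\ref{rel3}) reorganize the patterns $RLL$ and $RL$; a second leading $R$ is absorbed into a $V$ via (\ref{multi}) and right-symmetry; (\ref{rel4}) restores the condition on the two leading indices; and (\ref{rel1}), i.e. $V_{x,x}=0$ and $V_{x,y}=-V_{y,x}$, restores the condition on the indices of each $V$. (One also uses (\ref{def3}) to see that $(F(X)^2F(X))F(X)=0$.) Carrying out this induction carefully, with an appropriate secondary parameter, is the bulk of the spanning argument.

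\emph{Linear independence.} Each element of $\mathcal B$ is a single nonassociative monomial of $F(X)$ (since $V_{x,y}=L_xR_y$, every operator word in the list assembles into one nested product). Suppose $\sum_b\lambda_b b=0$ in $F(X)$ with $b$ ranging over $\mathcal B$ and $\lambda_b\in\mathbb F$ almost all zero. Because $F(X)$ is multigraded and the elements of $\mathcal B$ are multihomogeneous, we may assume that all $b$ occurring have one common multidegree $\mu$. Following L'vov and Isaev, I would then, for each fixed $b_0$ that occurs, choose $n$ and a substitution $x_i\mapsto u_i$ of the variables appearing in $\mu$ by basis vectors of $P_n$ such that $b_0$ evaluates to a nonzero $c_\ell\in P_n$ while every other $b$ in the sum evaluates to $0$; this forces $\lambda_{b_0}=0$, and ranging over $b_0$ finishes the proof. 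The substitutions come from the multiplication of $P_n$: the rules $a_{ij}c_i=d_{ij}$, $b_{ij}c_i=e_{ij}$, $a_{ij}e_{ij}=-b_{ij}d_{ij}=c_j$ yield $c_iV_{b_{ij},a_{ij}}=c_j$, so a monomial $x_{i_0}\hat R_{x_{j_0}}V_{x_{p_1},x_{q_1}}\cdots V_{x_{p_k},x_{q_k}}\hat L_{x_s}$ is sent to $\pm c_\ell$ precisely when its subscripts are matched to a path $c_{\ell_0}\to c_{\ell_1}\to\cdots$ through $P_n$, and to $0$ for any incompatible substitution; the low-degree shapes $x_sx_i$, $x_s(x_ix_j)$, $x_iV_{x_p,x_q}$ are separated the same way via short initial segments of such paths.

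The main obstacle is exactly this last step. One must engineer, inside a single multidegree $\mu$, substitutions into the $P_n$ that single out each basis monomial in turn: distinguishing the $L$-normed shape from the $V$-chains, coping with a variable occurring several times in a monomial (so that its image in $P_n$ must play several roles simultaneously), and taking $n$ large enough that all the required paths exist and stay disjoint. This combinatorial bookkeeping, where the precise multiplication table of $P_n$ is used, is the technical heart of the proof. (Alternatively, one can put a multiplication directly on $\mathrm{Span}\,\mathcal B$ via the reductions obtained in the spanning part, check that the resulting algebra satisfies (\ref{def0}), (\ref{def1}), (\ref{def2}), and deduce that the canonical surjection $F(X)\to\mathrm{Span}\,\mathcal B$ is an isomorphism by the spanning part; this only trades the bookkeeping for an equally substantial confluence check on the reduction rules.)
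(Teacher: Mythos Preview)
Your spanning argument is the paper's: both reduce to showing that $vR_{x_r},vL_{x_r}\in\mathrm{Span}\,\mathcal B$ for every $v\in\mathcal B$, and both close the finitely many cases with the relations of Lemma~\ref{relations}.

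For linear independence you have the right evaluation idea but have left the repeated-variable problem as an explicit obstacle, and this is exactly where the paper does something you did not. Instead of working inside a fixed multidegree, the paper first \emph{linearizes}. The point is that if $v\in\mathcal B$ has $\deg_{x_i}(v)=k$, one replaces the $k$ occurrences of $x_i$ by fresh variables $y_1,\dots,y_k$, renumbered so that each $y_r$ sits in the same order position relative to the other variables as $x_i$ did; then every $v(y_{\sigma(1)},\dots,y_{\sigma(k)})$ is again an element of $\mathcal B$, and specializing the $y$'s back to $x_i$ recovers $v$, so distinct $v$'s contribute disjoint families of linearized terms. Hence a nontrivial combination of $\mathcal B$-elements linearizes (and, after zeroing redundant variables, restricts) to a nontrivial \emph{multilinear} combination $f=\sum_r\alpha_r u_r$ with $u_r\in\mathcal B$. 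At that stage your path-in-$P_n$ substitution works with no bookkeeping at all: for
\[
u_1=x_iR_{x_j}V_{x_{p_1},x_{q_1}}\cdots V_{x_{p_k},x_{q_k}}L_{x_s}
\]
one sets $x_i=d_{1,2}$, $x_j=-b_{1,2}$, $x_{p_r}=a_{r+1,r+2}$, $x_{q_r}=-b_{r+1,r+2}$, $x_s=a_{k+2,k+3}$ in $P_{k+3}$; then $u_1$ evaluates to $d_{k+2,k+3}\neq0$ while every other $u_r$ evaluates to $0$, forcing $\alpha_1=0$. The variants with $R_{x_j}$ or $L_{x_s}$ absent are handled by obvious modifications. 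So the obstacle you flagged is dissolved by linearization rather than by a harder direct argument; without that step, your proposal for independence is not yet a proof.
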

\begin{proof} In order to show that $\mathcal{B}$ linearly spans $F(X)$ it is sufficient to verify that, for any $v\in \mathcal{B}$, the elements $v R_{x_i}$ and $vL_{x_i}$ belong to the linear span of 
$\mathcal{B}$. This is easy to do using the identities (\ref{rsym}), (\ref{def2}), and Lemma \ref{relations}. For example, let  
$$v=x_i\Hat{R}_{x_j}V_{x_{p_1},x_{q_1}}\cdots V_{x_{p_k},x_{q_k}}L_{x_s}.$$
Then 
$$vR_{x_r}=x_i\Hat{R}_{x_j}V_{x_{p_1},x_{q_1}}\cdots V_{x_{p_k},x_{q_k}}L_{x_s}R_{x_r}=x_i\Hat{R}_{x_j}V_{x_{p_1},x_{q_1}}\cdots V_{x_{p_k},x_{q_k}}V_{x_s,x_r}.$$
By (\ref{rel6}), we get  
$$vL_{x_r}=x_i\Hat{R}_{x_j}V_{x_{p_1},x_{q_1}}\cdots V_{x_{p_k},x_{q_k}}L_{x_s}L_{x_r}=-x_i\Hat{R}_{x_j}V_{x_{p_1},x_{q_1}}\cdots V_{x_{p_k},x_{q_k}}V_{x_r,x_s}.$$
 Applying (\ref{rel1}) we can express $vR_{x_r}$ and $vL_{x_r}$ as a linear combination of elements of $\mathcal{B}$. 
 
 It remains to prove the linear independence of elements of $\mathcal{B}$. Suppose that $f=f(x_1,x_2,\ldots,x_n)\in F(X)$ is a nontrivial linear combination of elements of $\mathcal{B}$. 
Suppose that  $v\in \mathcal{B}$ and $\deg_{x_i}(v)=k$. Let's write $v=v(x_i,\ldots,x_i)$ in order to differ the presence of $x_i$ in different places. To linearize $v$ in $x_i$ we use new variables $y_1,\ldots,y_k\in X$ and, after renumeration, we can assume that $y_r<x_j$ if $i<j$ and $x_j<y_r$ if $j<i$ for all $1\leq r\leq k$. Notice that every word $v(y_{\sigma(1)},\ldots,y_{\sigma(k)})$, where 
$\sigma\in S_k$ and $S_k$ is the symmetric group in $k$ symbols, is an element of  $\mathcal{B}$. Then the full linearization of $v$ in $x_i$ is a linear combination of basis elements $v(y_{\sigma(1)},\ldots,y_{\sigma(k)})$. Therefore, by linearizing a nontrivial element $f$, we obtain a nontrivial element that is a linear combination of multilinear elements from $\mathcal{B}$. Substituting zeroes instead of some variables, if necessary, we can make $f$ linear in each variable. 
 Therefore,  we can assume that  $f$ is a multilinear nontrivial identity in the variables $x_1,\ldots,x_n$. Let
$$f=\sum_{i=1}\alpha_iu_i,$$
where $\alpha_i\in\mathbb{F}$ and $u_i\in\mathcal{B}$. Suppose, for example, that  $$u_1=x_iR_{x_j}V_{x_{p_1},x_{q_1}}\cdots V_{x_{p_k},x_{q_k}}L_{x_s}.$$
Set  $x_i=d_{1,2}$, $x_j=-b_{1,2}$, $x_{p_r}=a_{r+1,r+2}$, $x_{q_r}=-b_{r+1,r+2}$ for all $r=1,2,\ldots k$, $x_s=a_{k+2,k+3}$.  We have $c_iV_{a_{ij},-b_{ij}}=c_j$ for all $i,j$. 
Then the value of $u_1$ under this substitution is $d_{k+2,k+3}$ and the value of any other $u_i$ is $0$. Consequently, the value of $f$ is $\alpha_1d_{k+2,k+3}\neq 0$. 
Thus, $f$ is not an identity for $\mathcal{R}$. 

If $L_{x_s}$ does not appear in $u_1$, then we perform the same substitutions for the variables. If $R_{x_j}$ does not appear in $u_1$, then we simply set $x_i=c_2$ and perform the same substitutions for the rest of the variables as described above. In both cases the value of $f$ is nonzero. This completes our proof. 
\end{proof}

Let $M=M(F(X))$ be the multiplication algebra of the algebra $F(X)$. Denote by $E_0$ the subalgebra (without identity) of $M$ generated by the operators $V_{x_i,x_j}$ with $i<j$ for all $i,j=1,2,\ldots$. Set also 
$$E_1=\sum_{j\geq 1}E_0L_{x_j}, \quad E_2=\sum_{i\geq 1}R_{x_i}E_0, \quad E_3=\sum_{i,j\geq 1}R_{x_i}E_0L_{x_j},$$
and  
$$R_k=\sum_{i\geq 1}x_iE_k, \quad \mbox{for } k=0,1,2,3.$$

According to Proposition \ref{basis}, the space $F(X)$ is the direct sum of the subspaces $R_k$ and the linear span of the elements of $\mathcal{B}$ of degree less than or equal to 3. 

\begin{lemma}\label{lemma6}
An identity $zf(x_1,\ldots,x_m)=0$, where $f\in E_0$, is a consequence of a system of identities 
\begin{equation}\label{a system of equations}
tg_j(x_1,\ldots,x_l)=0, \quad g_j\in E_0, \quad j\in \mathcal{J}, 
\end{equation}
in the variety $\mathcal{R}$, where $\mathcal{J}$ is any set of indices, if and only if the operator $f(x_1,\ldots,x_m)$ belongs to the ideal of the associative algebra $E_0$ generated by the set $G$ of all operators $\varphi(g_j)$, where $\varphi$ runs over the set of all linear endomorphisms   $\varphi:X\rightarrow \mathbb{F}X=\sum_{i\geq 1}\mathbb{F}x_i$ and $j\in \mathcal{J}$.
\end{lemma}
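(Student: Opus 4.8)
The plan is to prove both implications by translating the notion of ``being a consequence of identities'' into algebraic closure operations inside $E_0$. First I would set up the dictionary: an identity $t\,g(x_1,\dots,x_l)=0$ with $g \in E_0$ holds in a quotient of $F(X)$ precisely when the operator $g$, together with all operators obtained by substituting the variables $x_i$ by arbitrary elements of $F(X)$, act trivially. Since $g \in E_0$ is built from the $V_{x_p,x_q}$, and since by Lemma~\ref{relations} (identities (\ref{rel4}), (\ref{rel5}), (\ref{rel6})) the product of any $V$-operator with an $R$-operator vanishes, with an $L$-operator reduces again to a $V$-operator, and since by right-symmetry and (\ref{def2}) higher multiplications collapse, the only substitutions into the variables of $g$ that produce a nonzero operator in $E_0$ are linear substitutions $x_i \mapsto \sum_k \lambda_k x_k$; any substitution involving a product of generators kills $g$ because $V_{u,v}$ with $u$ or $v$ a product is zero (Lemma~\ref{V-action} gives the analogous statement for $P_n$, and the same computation works in $F(X)$ using (\ref{def2}) and (\ref{def3})). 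This is the step I expect to be the main obstacle: one must argue carefully that \emph{all} consequences of the system (\ref{a system of equations}) that land in $R_0$-type expressions $z f(x_1,\dots,x_m)=0$ are already captured at the level of the associative algebra $E_0$, i.e. that no genuinely ``nonlinear'' substitution or interaction with the generator $z$ produces new operator identities outside the ideal.

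Second, for the ``if'' direction: suppose $f$ lies in the two-sided ideal of $E_0$ generated by $G = \{\varphi(g_j)\}$. Write $f = \sum_s h_s\,\varphi_s(g_{j_s})\,h'_s$ with $h_s, h'_s$ in $E_0$ (or empty, since $E_0$ has no unit one must also allow the terms $\varphi_s(g_{j_s})\,h'_s$, $h_s\,\varphi_s(g_{j_s})$, and $\varphi_s(g_{j_s})$ alone). For each summand, starting from the instance $t\,g_{j_s}(x_1,\dots,x_l)=0$ of the hypothesis, apply the substitution $\varphi_s$ to the variables — this is a legitimate consequence since substituting elements of $\mathbb{F}X$ for variables preserves identities — to get $t\,\varphi_s(g_{j_s})=0$; then right-multiply the identity variable $t$ on the left by an arbitrary element and on the right by the operators packaged in $h_s$ and $h'_s$ (both of which are again consequences: multiplying an identity on either side by a fixed word in the multiplication operators is always a consequence). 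Summing over $s$ yields exactly $z\,f(x_1,\dots,x_m)=0$. The bookkeeping here is routine once one notes that $h_s, h'_s \in E_0$ are themselves words in the $V$-operators, hence applying them corresponds to multiplying by further generators $x_k$, which is allowed.

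Third, for the ``only if'' direction: suppose $z\,f = 0$ is a consequence of the system in the variety $\mathcal{R}$. By definition this means that in the relatively free algebra $F(X)/I$, where $I$ is the $T$-ideal generated by the $t\,g_j$, the element $z\,f(x_1,\dots,x_m)$ is zero. Using the basis of $F(X)$ from Proposition~\ref{basis} and the decomposition $F(X) = R_0 \oplus R_1 \oplus R_2 \oplus R_3 \oplus (\text{low-degree part})$, one identifies the component of $I$ lying in $R_0$: it is spanned exactly by the elements $z\,\psi(g_j)\,\omega$ where $\psi$ ranges over linear substitutions and $\omega$ over words in the multiplication operators that keep us inside $E_0$, because any element of the $T$-ideal, when projected onto the ``$z E_0$-part'', reduces by the relations of Lemma~\ref{relations} to such a form — the crucial point being (as in the first paragraph) that nonlinear substitutions and the $R$/$L$-decorations either vanish or are absorbed into $E_0$. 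Hence $z\,f = 0$ in $F(X)/I$ forces $f$ itself to lie in the ideal of $E_0$ generated by $G$. Here the linear independence half of Proposition~\ref{basis} is what lets us read off coefficients and conclude $f \in (G)$ rather than merely $f \equiv 0$ modulo something larger.

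Overall the proof is a translation between three descriptions of the same object — consequences of identities, elements of a $T$-ideal, and elements of a two-sided ideal in the associative algebra $E_0$ — with Lemma~\ref{relations} and Proposition~\ref{basis} doing all the structural work. The one place demanding genuine care is verifying that $E_0$ really is ``closed enough'': that every consequence of type $z f = 0$ comes from linear substitutions and $E_0$-multiplications alone, with nothing escaping through products of generators or through the interaction of the $R$- and $L$-operators with the $V$-operators. That is guaranteed by (\ref{rel4})--(\ref{rel6}) together with (\ref{def2}) and (\ref{def3}), which I would invoke explicitly.
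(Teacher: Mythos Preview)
Your plan is correct and follows essentially the same route as the paper: both directions hinge on (i) reducing arbitrary endomorphisms of $F(X)$ to linear substitutions via (\ref{def2}) and (\ref{def3}), (ii) using the relations (\ref{rel5}) and (\ref{rel6}) of Lemma~\ref{relations} to show that any $R$- or $L$-decorations on either side of a $g_j^{\varphi}$ either vanish or collapse back into $E_0$, and (iii) invoking the linear independence part of Proposition~\ref{basis} to read off that $f$ lies in the two-sided ideal generated by $G$. One small imprecision: in your ``only if'' paragraph you describe the $R_0$-part of the $T$-ideal as spanned by elements $z\,\psi(g_j)\,\omega$, with only a right factor $\omega$; you need a left factor as well (as you correctly wrote in the ``if'' direction), and the paper makes this explicit by writing the general consequence as $x_{i_r} w_r g_{j_r}^{\varphi_r} v_r$ with $w_r,v_r$ eventually forced into $E_0$ and $x_{i_r}=x_{m+1}$ by linear independence.
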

\begin{proof}
Suppose that $f$ belongs to the ideal of $E_0$ generated by $G$. Then $$f=\sum_{r=1}^{t}u_rg_{j_r}^{\varphi_r}v_r,$$
for some linear endomorphisms $\varphi_r$ and $u_r,v_r\in E_0$. Therefore, 
$$zf=\sum_{r=1}^{t}(zu_r)g_{j_r}^{\varphi_r}v_r$$
and $zf=0$ is a consequence of the system of identities (\ref{a system of equations}).

Let's describe all the consequences of the identities (\ref{a system of equations}). Let $\varphi: F(X)\to F(X)$ be an arbitrary endomorphism and set $\varphi(x_i)=y_i+h_i$, where $y_i\in \mathbb{F}X$ and $h_i\in F(X)^2$ for all $i$. Since $g_j\in E_0$, using (\ref{def2}) and (\ref{def3}), we get
$$t\varphi(g_j)=tg_j(y_1,\ldots,y_l)=0.$$

Thus, a general form of consequences of the identities   (\ref{a system of equations}) can be expressed as
$$\sum_{r=1}^{t}u_rg_{j_r}^{\varphi_r}v_r,$$ 
where $u_r\in F(X)$, $v_r\in M(F(X))$, and $\varphi_r$ are linear endomorphisms. We know that 
$g_{j_r}^{\varphi_r}\in E_0$. 
We also claim that $u_r$ and $v_r$ can be represented in the forms
$$x_i\Hat{R}_{x_j}V_{x_{p_1},x_{q_1}}\cdots V_{x_{p_k},x_{q_k}}, \quad\quad V_{x_{p'_1},x_{q'_1}}\cdots, V_{x_{p'_k},x_{q'_k}}\Hat{L}_{x_s},$$
respectively, 
where $i<j$, $p_l<q_l$,  $p'_l<q'_l$ and $k=0,1,\ldots$.

Suppose that $u_r$ is a basis element that ends with  $L_{x_s}$.  Then, by (\ref{rel5}) and  (\ref{rel6}), we can derive that 
$$V_{x_i,x_j}L_{x_s}V_{x_k,x_t}=V_{x_i,x_j}L_{x_s}L_{x_k}R_{x_t}=-V_{x_i,x_j}V_{x_k,x_s}R_{x_t}=0.$$
Consequently, we have $V_{x_i,x_j}L_{x_s}E_{0}=0$.

If  $u_r=x_iR_{x_j}L_{x_k}$,  then by (\ref{rel2}) and (\ref{rel5})  we get
$$u_rV_{x_s,x_t}=x_iR_{x_j}L_{x_k}L_{x_s}R_{x_t}=x_jV_{x_i,x_k}L_{x_s}R_{x_t}-x_iR_{x_j}V_{x_s,x_k}R_{x_t}=x_jV_{x_i,x_k}V_{x_s,x_t}.$$
So, we can conclude that $u_r$ has the claimed form.

Now, let's consider the case when $v_r$ is a basis element that starts with $R_y$. According to (\ref{rel5}), we have $E_0R_y=0$. If $v_r$ starts with $L_{x_i}L_{x_j}$, then by using (\ref{rel6}), we find
$$V_{x_k,x_s}L_{x_i}L_{x_j}=-V_{x_k,x_s}V_{x_j,x_i}.$$
Hence,  $v_r$ also has the claimed form.

If $zf=0$ is a consequence of the identities  (\ref{a system of equations}), then we get an equality of the form 
$$x_{m+1}f(x_1,\ldots,x_m)=\sum_{r=1}^{t}\lambda_rx_{i_r}w_rg_{j_r}^{\varphi_r}v_r,$$
where $x_{i_r}w_r=u_r$, $w_r\in E_0+E_2$ and $v_{r}\in E_0+E_1$. Notice that every element 
$x_{i_r}w_rg_{j_r}^{\varphi_r}v_r$ belongs to $\mathcal{B}$. Consequently, we may assume that 
$x_{i_r}=x_{m+1}$, $w_r, v_{r}\in E_0$,  and 
$$f(x_1,\ldots,x_m)=\sum_{r=1}^{t}\lambda_rw_rg_{j_r}^{\varphi_r}v_r.$$ 
\end{proof}

\section{\label{3}\ Identities of $P_n$.}

In this section, we study the connections between the identities and the operator identities of  $P_n$ for $n\geq 2$.

\begin{lemma}\label{lemma2}
If $f=f(x_1,\ldots,x_m)\in F(X)$ and $f=0$ is an identity of $P_n$ for $n\geq 2$, then 
\begin{equation}\label{decomp}
f=f_0+f_1+f_2+f_3\in F(X), \quad  f_k\in R_k, 
\end{equation}
 and  $f_k=0$ is an identity of $P_n$ for all $k=0,1,2,3$.
\end{lemma}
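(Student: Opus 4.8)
The plan is to exploit the direct-sum decomposition of $F(X)$ provided by Proposition \ref{basis}: every element of $F(X)$ is uniquely a sum of a degree-$\leq 3$ part (which lands in $R_0 + R_1 + R_2 + R_3$ after one further observation, or more precisely the terms $x_i$, $x_i\hat R_{x_j}L_{x_s}$ of the basis sit inside $R_1 \cup R_2 \cup \ldots$) plus contributions from the $R_k$. Writing $f = f_0 + f_1 + f_2 + f_3$ with $f_k \in R_k$, I must show each $f_k$ separately vanishes on $P_n$. The strategy is a multilinearization-and-grading argument combined with a clean evaluation trick on the specific algebra $P_n$: since $P_n$ is graded in a suitable sense and the basis monomials of $\mathcal B$ carry distinguishable ``shapes'' (number of $R$'s, $V$'s, and whether an $L$ is present), I can separate $f_0,\dots,f_3$ by choosing evaluations on $P_n$ that detect one shape at a time.

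First I would reduce to the multilinear case exactly as in the proof of Proposition \ref{basis}: linearizing $f$ and setting variables to zero, it suffices to treat $f$ multilinear in $x_1,\dots,x_m$, and the decomposition $f = \sum f_k$ respects multilinearization since each $R_k$ is spanned by basis monomials of a fixed combinatorial type. Next, observe that the four summands are distinguished by the placement of left/right multiplications: $f_1$-monomials $x_i E_0 L_{x_j}$ end in $L$ with no leading $R$; $f_2$-monomials $x_i R_{x_j} E_0$ begin with $R$ and no trailing $L$; $f_3$-monomials have both; and $R_0$-monomials $x_i E_0$ have neither (and the degree-$\leq 3$ basis elements $x_i$, $x_i\hat R_{x_j}L_{x_s}$ must also be sorted — here I would absorb $x_i \in R_0$ trivially and note $x_iR_{x_j}L_{x_s} = x_iR_{x_j}L_{x_s}$ already lies among the relevant pieces, or handle these low-degree terms by direct substitution of basis elements $a_{ij}, c_i$ of $P_n$). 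The key point is that on $P_n$ we have the precise action rules from \eqref{main reltions}: $R_x$ sends $A_n \to C_n$ and $\overline C_n \to C_n$ and kills $C_n$; $L_x$ sends $C_n \to \overline C_n$; and the composite $V_{a_{ij},-b_{ij}}$ sends $c_i \mapsto c_j$. So by choosing the ``seed'' variable $x_i$ and the trailing/leading variables to be appropriate $a$'s, $b$'s, $c$'s, $d$'s, one arranges that only monomials of one prescribed shape survive.

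Concretely, for $f_3$ I would use essentially the substitution from Proposition \ref{basis}'s linear-independence argument ($x_i = d_{1,2}$, $x_j = -b_{1,2}$, the $V$-variables as $a_{r+1,r+2}, -b_{r+1,r+2}$, and the final $L$-variable $x_s = a_{k+2,k+3}$), which makes an $f_3$-monomial act as $c \mapsto$ a nonzero $d$, while $f_0,f_1,f_2$ monomials — lacking the matching $R$ or $L$ — evaluate to $0$ because the chain breaks (e.g.\ no trailing $L$ leaves the output in $C_n$, while the $f_3$ shape pushes it into $\overline C_n$, or a missing leading $R$ fails to consume $d_{1,2}$). This forces $f_3 = 0$ on $P_n$; then $f_0 + f_1 + f_2 = 0$ on $P_n$, and analogous tailored substitutions (setting the seed to $c_2$ to kill the leading $R$, or choosing the trailing variable outside $\overline C_n$ to detect presence of $L$) peel off $f_2$, then $f_1$, leaving $f_0 = 0$. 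The main obstacle I anticipate is bookkeeping: verifying that for each shape there really is a single substitution making exactly the target monomials nonzero and \emph{all} others zero — in particular ruling out accidental cancellations among same-shape monomials requires the index-shifting structure $i < j$, $p_r < q_r$ of $\mathcal B$, just as in Proposition \ref{basis}. Once the nonvanishing of one chosen monomial per shape is secured against a background of zeros, linearity gives that the coefficient of that monomial vanishes, and iterating over all monomials of that shape (via different index choices) yields $f_k = 0$ as a formal element, hence as an identity of $P_n$.
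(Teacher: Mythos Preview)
Your approach has a genuine gap. The substitution you invoke from Proposition~\ref{basis} makes a single multilinear basis monomial with $k$ factors $V$ evaluate to a nonzero element of $P_{k+3}$, not of a fixed $P_n$: it uses indices running up to $k+3$. Here $n$ is fixed (and the interesting case is $n=2$), so you cannot kill monomial coefficients one by one by working inside $P_n$. If your argument worked, it would show that every identity of $P_n$ in $\mathcal R$ is already zero in $F(X)$, i.e.\ that $P_n$ satisfies no identities beyond those of $\mathcal R$. That is false: by Lemma~\ref{helpful lemma 2}, $E_0(P_n)\cong M_n(\mathbb F)$ satisfies nontrivial polynomial identities (e.g.\ the Hall identity for $n=2$), and these lift to nonzero elements of $R_0$ that are identities of $P_n$. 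So the conclusion ``$f_k=0$ as a formal element'' is simply unreachable, and the whole peeling-off-monomials plan collapses.

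The paper's proof is structurally different. After disposing of the degree~$\leq 3$ terms by explicit substitutions (which you correctly flagged as needing care), it does \emph{not} try to isolate individual monomials. Instead it separates the four pieces by two grading arguments on $P_n$. First, the values of $f_0+f_2$ lie in $C_n$ and those of $f_1+f_3$ lie in $\overline C_n$ (because of whether the basis word ends in $L_{x_s}$ or not, together with $P_nC_n\subseteq\overline C_n$), so $f_0+f_2=0$ and $f_1+f_3=0$ are each identities. Second, for any $p_s=v_s+\overline v_s+a_s\in C_n\oplus\overline C_n\oplus A_n$, Lemma~\ref{V-action} gives $p_iV_{p_j,p_k}=v_iV_{a_j,a_k}$, so $f_0(p_1,\dots,p_m)=f_0(v_1+a_1,\dots,v_m+a_m)$; but on $A_n+C_n$ the operator $R_{x}V_{y,z}$ vanishes, so $f_2$ evaluates to zero there. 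Hence $f_0(p)=(f_0+f_2)(v+a)=0$ for all $p$, and then $f_2=0$ follows. The key idea you are missing is this replacement of arbitrary arguments $p_i$ by their $A_n+C_n$ components, which kills $f_2$ without touching $f_0$.
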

\begin{proof} Let 
$$f=\sum_{i=1}^m\lambda_ix_i+\sum_{i,j=1}^m\lambda_{ij}x_ix_j+\sum_{i,j,k=1,i<j}^m\lambda_{ijk}x_iR_{x_j}L_{x_k}+f',$$
where $f'$ is a linear combination of elements from $\mathcal{B}$ of degree $\geq 4$. 

We first show that $\lambda_i=\lambda_{ij}=\lambda_{ijk}=0$ for all $i,j,k=1,\ldots,m$. For any  fixed $i$ the substitution $x_i=c_1$ and $x_j=0$ for all $j\neq i$ gives that $\lambda_i c_1=0$, which implies $\lambda_i=0$. 

If $i\neq j$, then the substitution $x_i=a_{11}$, $x_j=c_1$,  and $x_k=0$ for all $k\neq i,j$, makes the value of $f$ equal to $\lambda_{ij}d_{11}=0$. We get the same value if $i=j$ under the substitution $x_i=x_j=a_{11}+c_1$ and $x_k=0$ for all $k\neq i,j$. This gives $\lambda_{ij}=0$ in both cases. 

Assume that $i<j>k$. If $i\neq k$, then the substitution $x_i=b_{11}$, $x_j=d_{11}$, $x_k=a_{12}$, and $x_t=0$ for all $t\neq i,j,k$, makes the value of $f$ equal to $-\lambda_{ijk}d_{12}$. This gives that $\lambda_{ijk}=0$. If $i=k$, then the substitution $x_i=b_{11}$, $x_j=d_{11}$, and $x_t=0$ for all $t\neq i,j$, gives that $-\lambda_{iji}e_{11}=0$ and $\lambda_{iji}=0$.  If  $i<j=k$, then the substitution $x_i=d_{11}$, $x_j=b_{11}$, and $x_t=0$ for all $t\neq i,j$, gives that $-\lambda_{ijj}e_{11}=0$ and $\lambda_{ijj}=0$. Finally, if $i<j<k$, then the  substitution $x_i=d_{11}$, $x_j=x_k=b_{11}$, and $x_t=0$ for all $t\neq i,j$, gives that $-\lambda_{ijk}e_{11}-\lambda_{ikj}e_{11}=0$, i.e., $\lambda_{ijk}=-\lambda_{ikj}=0$.  

Thus, $f$ is a linear combination of elements of $\mathcal{B}$ of degree $\geq 4$.  Suppose that $f$ is written as in (\ref{decomp}). Taking into account the relations $D_nP_n\subseteq C_n$ and  $P_nC_n\subseteq\overline{C}_n$ it can be observed that the images of $F_0=f_0+f_2$ and $F_1=f_1+f_3$ belong to $C_n$ and  $\overline{C}_n$, respectively.   Therefore, if $f=0$ is an identity of $P_n$, then  $F_0=0$ and $F_1=0$ are also identities of $P_n$.

Suppose that $$f_k(x_1,\ldots,x_m)=\sum_{i=1}^mx_ig_i^{(k)}(x_1,\ldots,x_m),$$ where $g_i^{(k)}\in E_{k}$ and $k=0,1,2,3$.
Let $p_1,\ldots,p_m\in P_n$ with $p_s=v_s+\overline{v}_s+a_s$, where $v_s\in C_n$, $\overline{v}_s\in \overline{C}_n$, $a_s\in A_n$. By Lemma \ref{V-action} we can obtain that
$$p_iV_{p_j,p_k}=v_iV_{p_j,p_k}=v_iV_{v_j+\overline{v}_j+a_j,v_k+\overline{v}_k}+v_iV_{v_j+\overline{v}_j,a_k}+v_iV_{a_j,a_k}=v_iV_{a_j,a_k}.$$
Then 
$$f_0(p_1,\ldots,p_m)=
\sum_{i=1}^mv_ig_i^{(0)}(a_1,\ldots,a_m)=f_0(v_1+a_1,\ldots,v_m+a_m).$$
It is easy to see that $(A_n+C_n)R_{a_i+v_i}V_{a_j+v_j,a_s+v_s}=0$ for all $a_i,a_j,a_s\in A_n$ and $v_i,v_j,v_s\in C_n$. It follows that
$$f_2(v_1+a_1,\ldots,v_m+a_m)=0.$$
Thus, $$f_0(p_1,\ldots,p_m)$$
$$=f_0(v_1+a_1,\ldots,v_m+a_m)+f_2(v_1+a_1,\ldots,v_m+a_m)$$$$=F_{0}(v_1+a_1,\ldots,v_m+a_m)=0.$$

Therefore, we can conclude that $f_0=0$ and $f_2=0$ are identities of $P_n$. Similarly, we can establish that $f_1=0$ and $f_3=0$ are also identities of $P_n$.
\end{proof}

\begin{lemma}\label{lemma3} 
If $f=f(x_1,\ldots,x_m)\in R_1+R_3$, then $fx_{m+1}\in R_0+R_2$ and if \\$f(x_1,\ldots,x_m)x_{m+1}=0$ is an identity of $P_n$, then $f=0$ is an identity of $P_n$ as well. 
\end{lemma}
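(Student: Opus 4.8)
The plan is to split the statement into its two assertions and handle them in turn.

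\medskip

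\textbf{First assertion: $fx_{m+1}\in R_0+R_2$.} Here $f=f(x_1,\dots,x_m)\in R_1+R_3$, so by definition of $R_1$ and $R_3$ we may write $f=\sum_{i}x_i h_i$ with each $h_i\in E_1+E_3$, i.e., each $h_i$ is a sum of terms of the form $w L_{x_j}$ with $w\in E_0$ or $w\in R_{x_k}E_0$ (and possibly a leading $R$). Multiplying on the right by $x_{m+1}$ replaces the trailing $L_{x_j}$ by $L_{x_j}R_{x_{m+1}}$. I would now invoke the operator relations of Lemma~\ref{relations}: by (\ref{rel5}) we have $V_{x,y}R_z=0$, and more to the point, a term ending in $\cdots V_{x_p,x_q}L_{x_j}R_{x_{m+1}}$ equals $\cdots V_{x_p,x_q}V_{x_j,x_{m+1}}$, so the trailing $L$ is converted into a trailing $V$. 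Hence every summand of $fx_{m+1}$ either vanishes or has its operator word ending in a $V$ (no trailing $L$), which places it in $E_0$ or $R_{x}E_0$; therefore $fx_{m+1}\in R_0+R_2$. This is the easy part and is essentially the same bookkeeping already carried out in the proof of Proposition~\ref{basis} when computing $vR_{x_r}$.

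\medskip

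\textbf{Second assertion: if $fx_{m+1}=0$ is an identity of $P_n$, so is $f=0$.} The strategy is to show that any nonzero value $f(p_1,\dots,p_m)$ can be ``detected'' by right-multiplying by a suitable extra element. By Lemma~\ref{lemma2} (applied after noting $f\in R_1+R_3$ forces the lower-degree coefficients to be irrelevant) it suffices to treat $f$ homogeneous, say $f\in R_1$ or $f\in R_3$, and by multilinearization we may assume $f$ is multilinear. Write $f=\sum_i x_i g_i$ with $g_i\in E_1$ (resp.\ $E_3$), each $g_i$ ending in some $L_{x_{j}}$. Suppose $f(p_1,\dots,p_m)=v\neq 0$ for some $p_1,\dots,p_m\in P_n$. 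By the structural relations (\ref{main reltions}), the image of $R_1$ lies in $\overline{C}_n$ and the image of $R_3$ lies in $\overline{C}_n$ as well — more precisely, evaluating any element of $\mathcal{B}$ ending in $L_{x_j}$ (without a trailing $V$) lands in $\overline{C}_n=\mathrm{Span}\{d_{ij},e_{ij}\}$. So $v\in\overline{C}_n$, say $v$ has a nonzero coordinate on $d_{kl}$ (the $e_{kl}$ case is symmetric). The key point: $d_{kl}$ is not a left annihilator — indeed $d_{kl}b_{kl}=-c_l\neq 0$, or equivalently $\mathrm{Ann}_lP_n=C_n$ by Lemma~\ref{lemma30}, and $\overline{C}_n\cap C_n=0$. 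Hence there is a basis element $p_{m+1}\in P_n$ with $v p_{m+1}\neq 0$; concretely, if $v=\sum\mu_{kl}d_{kl}+\sum\nu_{kl}e_{kl}$ is nonzero, pick a term, say $\mu_{kl}\neq 0$, and a companion element to expose it.

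\medskip

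The subtlety — and the step I expect to require the most care — is that $v$ is a \emph{linear combination} of several $d_{ij},e_{ij}$, and multiplying by a single $p_{m+1}$ might conceivably cause cancellation among the images. To handle this cleanly I would not argue with an arbitrary value $v$, but instead argue at the level of identities using the substitution technique from the proof of Proposition~\ref{basis}: since $f\neq 0$ as an element of $F(X)$ and $f\in R_1+R_3$, reduce to a single multilinear basis word $u_1 = x_i\Hat{R}_{x_j}V_{x_{p_1},x_{q_1}}\cdots V_{x_{p_k},x_{q_k}}L_{x_s}$ appearing with nonzero coefficient, apply the explicit substitution $x_i=d_{1,2}$ (or $c_2$ if no $R$), $x_{p_r}=a_{r+1,r+2}$, $x_{q_r}=-b_{r+1,r+2}$, $x_s=a_{k+2,k+3}$ used there, under which $u_1\mapsto$ (a nonzero multiple of) $d_{k+2,k+3}$ or $e_{k+2,k+3}$ and all other basis words vanish; then right-multiply by $x_{m+1}=b_{k+2,k+3}$, so that $u_1 x_{m+1}\mapsto -c_{k+3}\neq 0$ while every other term still vanishes. (One must check the chosen substitution is consistent with whether $u_1$ ends in $L_{x_s}$ — this is exactly the case split ``if $L_{x_s}$ does not appear in $u_1$'' in Proposition~\ref{basis}, which I would reproduce.) This shows $f(x_1,\dots,x_m)x_{m+1}$ is not an identity of $P_n$, contradicting the hypothesis, and completes the proof.
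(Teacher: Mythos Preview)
Your first assertion is fine and matches the paper. For the second assertion, your \emph{first} argument is already complete and is exactly the paper's proof: if $f(p_1,\dots,p_m)x_{m+1}=0$ for all choices of $p_{m+1}$, then every value $v=f(p_1,\dots,p_m)$ lies in $\mathrm{Ann}_lP_n=C_n$ (Lemma~\ref{lemma30}); since $f\in R_1+R_3$ forces $v\in\overline{C}_n$ and $C_n\cap\overline{C}_n=0$, we get $v=0$. That's the whole proof. The reductions to homogeneous and multilinear $f$ via Lemma~\ref{lemma2} are unnecessary.

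Your ``subtlety'' paragraph is where things go wrong. The cancellation worry is unfounded: the argument above is purely about the \emph{value} $v$, and Lemma~\ref{lemma30} already says that a nonzero element of $\overline{C}_n$ is never a left annihilator, regardless of how many basis elements $d_{ij},e_{ij}$ appear in it. No explicit witness $p_{m+1}$ is needed.

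More seriously, the alternative route you then propose does not prove the lemma. You switch hypotheses from ``$f$ takes a nonzero value on $P_n$'' to ``$f\neq 0$ in $F(X)$'', which is a different (and for this purpose useless) assumption: the contrapositive of the lemma starts from the former, not the latter. And the explicit substitution from Proposition~\ref{basis} lands in $P_{k+3}$, where $k$ is the number of $V$-factors in the chosen basis word $u_1$; this has nothing to do with the fixed $n$ in the statement. So the substitution argument, even if carried out, would only show that $fx_{m+1}$ fails on some large $P_{k+3}$, not on the given $P_n$. Drop that paragraph entirely; your preceding paragraph already finishes the job.
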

\begin{proof} We have $fx_{m+1}\in R_0+R_2$ by the definition of the spaces $R_i$, where $0\leq i\leq 3$. If $fx_{m+1}=0$ is an identity of $P_n$, then all values of $f$ in $P_n$ belong to 
$C_n=\mathrm{Ann}_l(P_n)$ by Lemma \ref{lemma30}. However, since $f$ is an element of $R_1+R_3$, the values of $f$ must belong to $\overline{C}_n$. Consequently, $f=0$ is an identity of $P_n$.
\end{proof}

Recall an exact formal definition of the linearization of identities \cite[Chapter 1]{ZSSS1982}.  Let $\mathcal{V}$ be an arbitrary variety of algebras and $\mathbb{F}\langle X\rangle$ be its free algebra over $\mathbb{F}$ generated by $X=\{x_1,x_2,\ldots\}$. Let  $y\in\mathbb{F}\langle X\rangle$ be an arbitrary fixed element. For a nonnegative integer $k$, we define the linear mapping $\Delta^k_{x_i}(y)$ on $\mathbb{F}\langle X\rangle$ as follows:
\begin{itemize}
\item $\Delta^0_{x_i}(y)$ is the identity mapping;
\item $x_s\Delta^k_{x_i}(y)=0$, if either $k>1$ or $k=1$, $i\neq s$;
\item $x_i\Delta^1_{x_i}(y)=y$;
\item $(uv)\Delta^k_{x_i}(y)=\sum_{r+s=k}(u\Delta^r_{x_i}(y))(v\Delta^s_{x_i}(y))$,
\end{itemize}
where $x_i\in X$ and $u,v$ are any monomials in $\mathbb{F}\langle X\rangle$. We also write $\Delta_{x_i}(y)$ instead of $\Delta^1_{x_i}(y)$. 

\begin{lemma}\label{lemma4}
Suppose that  $f=f(x_1,\ldots,x_m)\in R_2$. Then $f\Delta_i(x_{m+1}x_{m+2})\in R_0$ for all $1\leq i\leq m$. Moreover, $f=0$ is an identity of $P_n$ if and only if $P_n$ satisfies the following system of identities 
\begin{equation}\label{partial lin}
f(x_1,\ldots,x_m)\Delta_i(x_{m+1}x_{m+2})=0, \quad 1\leq i\leq m.
\end{equation} 
\end{lemma}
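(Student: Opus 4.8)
The plan is to prove the membership $f\Delta_i(x_{m+1}x_{m+2})\in R_0$ by a direct computation with the operator identities of Section 2, and then to deduce the equivalence from a single structural remark about how monomials of $R_2$ act on $P_n$ --- a remark that also keeps the equivalence valid over an arbitrary field, with no appeal to linearization of identities. For the membership, by linearity it suffices to take $f$ equal to a basis monomial $w=x_{i_0}R_{x_{j_0}}V_{x_{i_1},x_{j_1}}\cdots V_{x_{i_k},x_{j_k}}$ with $k\ge 1$ (Proposition \ref{basis}) and to examine, for each occurrence of $x_i$ in $w$, the monomial obtained by substituting $x_{m+1}x_{m+2}$ for that occurrence. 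If $x_i$ lies inside a factor $V_{x_{i_r},x_{j_r}}$ with $r\ge 1$, let $v$ be the prefix of $w$ before that factor; the substitution produces $vL_{x_{m+1}x_{m+2}}R_{x_{j_r}}(\cdots)$ or $vL_{x_{i_r}}R_{x_{m+1}x_{m+2}}(\cdots)$, and since $v$ is a product a short use of (\ref{rsym}) rewrites $(x_{m+1}x_{m+2})v$ (resp. $(x_{i_r}v)(x_{m+1}x_{m+2})$) as a combination of terms of the shapes $(ab)(cd)$ and $((ab)c)d$, all killed by (\ref{def2}) and (\ref{def3}); so this contribution is $0$. If $x_i=x_{j_0}$ is the right multiplier, the monomial becomes $x_{i_0}R_{x_{m+1}x_{m+2}}V_{x_{i_1},x_{j_1}}\cdots=x_{m+1}R_{x_{m+2}}L_{x_{i_0}}L_{x_{i_1}}R_{x_{j_1}}(\cdots)$, and one application of (\ref{rel2}) followed by (\ref{rel5}) collapses it to $x_{m+2}V_{x_{m+1},x_{i_0}}V_{x_{i_1},x_{j_1}}\cdots V_{x_{i_k},x_{j_k}}\in R_0$. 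If $x_i=x_{i_0}$ is the leading variable, (\ref{rel3}) and (\ref{rel5}) rewrite $(x_{m+1}x_{m+2})R_{x_{j_0}}V_{x_{i_1},x_{j_1}}\cdots$ as $x_{j_0}R_{x_{m+1}x_{m+2}}V_{x_{i_1},x_{j_1}}\cdots$, an instance of the preceding case. Summing over the occurrences of $x_i$ gives $f\Delta_i(x_{m+1}x_{m+2})\in R_0$.

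The equivalence rests on the remark: \emph{a monomial $w=x_{i_0}R_{x_{j_0}}V_{x_{i_1},x_{j_1}}\cdots V_{x_{i_k},x_{j_k}}$ of $R_2$ vanishes under any substitution $x_l\mapsto p_l\in P_n$ that assigns elements of $D_n$ to at least two of its variable positions}, because by Lemma \ref{V-action} a factor $V_{x_{i_r},x_{j_r}}$ with a $D_n$-argument acts as $0$ on $P_n$, so the two $D_n$-positions can only be the leading and the right-multiplier ones, and then the head $p_{i_0}p_{j_0}\in D_n^2=0$. Now fix $p_1,\ldots,p_m\in P_n$ and write $p_l=a_l+\gamma_l+\delta_l$ with $a_l\in A_n$, $\gamma_l\in C_n$, $\delta_l\in\overline{C}_n$. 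Expanding $f(p_1,\ldots,p_m)$ by distributivity in these pieces, every term with two pieces in $D_n=C_n\oplus\overline{C}_n$ dies by the remark; the term with all pieces in $A_n$ dies since $A_n^2=0$ and $f\in R_2$; and a term whose only piece in $D_n$ is some $\gamma_s\in C_n=\mathrm{Ann}_lP_n$ (Lemma \ref{lemma30}) also dies --- as the head it gives $\gamma_sp_{j_0}=0$; as the right multiplier it gives $a_{i_0}\gamma_s\in A_nC_n\subseteq\overline{C}_n$, annihilated by the next factor since $(A_n+\overline{C}_n)V_{x,y}=0$; inside a $V$ it makes that factor $0$. Hence $f(p_1,\ldots,p_m)$ equals the sum over $s=1,\ldots,m$ of the terms obtained from $f$ by replacing one occurrence of $x_s$ by $\delta_s$ while every $x_l$ with $l\ne s$ is set to $a_l$.

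The two directions now follow at once. Since $\overline{C}_n=A_nC_n$ (because $d_{ij}=a_{ij}c_i$ and $e_{ij}=b_{ij}c_i$), write $\delta_s=\sum_k u^{(s)}_kc^{(s)}_k$ with $u^{(s)}_k\in A_n\subseteq P_n$ and $c^{(s)}_k\in C_n\subseteq P_n$; by linearity the formula just obtained reads $f(p_1,\ldots,p_m)=\sum_{s,k}f\Delta_s(x_{m+1}x_{m+2})\big|_{x_{m+1}\mapsto u^{(s)}_k,\ x_{m+2}\mapsto c^{(s)}_k,\ x_l\mapsto a_l}$, so if $P_n$ satisfies all the identities (\ref{partial lin}) then $f=0$ on $P_n$. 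Conversely, if $f=0$ on $P_n$, take any $p_1,\ldots,p_{m+2}\in P_n$, put $q=p_{m+1}p_{m+2}\in D_n$, and expand $f(p_1,\ldots,p_{s-1},p_s+q,p_{s+1},\ldots,p_m)$ in ``$p_s$ against $q$'': the left-hand side is $0$ (a value of $f$ on $P_n$), the term with no copy of $q$ is $f(p_1,\ldots,p_m)=0$, the terms with two or more copies of $q$ vanish by the remark, so the only surviving term --- which is precisely $f\Delta_s(x_{m+1}x_{m+2})$ evaluated at $p_1,\ldots,p_{m+2}$ --- is $0$. I expect the case analysis establishing the membership in $R_0$ to be the most laborious part; the equivalence itself is essentially immediate once the vanishing remark is in hand.
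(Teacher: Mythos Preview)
Your proof is correct and follows essentially the same route as the paper. For the membership in $R_0$ both arguments amount to the same three-case computation; the paper handles the leading-variable case in one line via $(uv)R_y=vV_{u,y}$, whereas you route it through the right-multiplier case using (\ref{rel3}) and (\ref{rel5}) (plus a tacit use of (\ref{def2}) to kill $yV_{z_1,uv}$; (\ref{rel4}) would also do this directly). For the equivalence, the paper packages the key observation into the single formula $f(p_1+v_1,\ldots,p_m+v_m)=f(p_1,\ldots,p_m)+\sum_i f\Delta_i(v_i)$ valid because $D_n^2=0$, and then uses $f(a_1,\ldots,a_m)=0$ and the fact that each $v_i\in P_n^2$; your ``remark'' about two $D_n$-positions is exactly the mechanism that makes the higher $\Delta$-terms vanish, and your finer decomposition $p_l=a_l+\gamma_l+\delta_l$ with a separate discussion of $\gamma_l\in C_n$ reaches the same conclusion with a little extra work. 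One small wording fix: in ``every $x_l$ with $l\ne s$ is set to $a_l$'' you should add that the remaining occurrences of $x_s$ are set to $a_s$ as well.
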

\begin{proof} 
Let $w=xR_yV_{z_1,t_1}\cdots V_{z_r,t_r}\in \mathcal{B}$ and $u,v\in X$. We have 
$$(xR_y)\Delta_x(uv)=(uv)R_y=vV_{u,y}.$$
By (\ref{rsym}), (\ref{def2}) and (\ref{def3}), we get 
$$(xR_yV_{z_1,t_1})\Delta_y(uv)=(z_1(x(uv)))t_1=((z_1x)(uv)-(z_1(uv))x+z_1((uv)x))t_1$$
$$=-((z_1(uv))x)t_1+(z_1((uv)x))t_1=(z_1((uv)x))t_1=vV_{u,x}V_{z_1,t_1}.$$
By (\ref{def2}) we get that $w\Delta_{z_i}(uv)=w\Delta_{t_i}(uv)=0$ for any $i=1,\ldots,r$.    
Thus, if $f(x_1,\ldots,x_m)\in R_2$, then  $f(x_1,\ldots,x_m)\Delta_i(x_{m+1}x_{m+2})\in R_0$.

 If $p_1,\ldots,p_m\in P_n$ and $v_1,\ldots,v_m\in D_n$, then we have 
\begin{equation}\label{helpful relation}
f(p_1+v_1,\ldots,p_m+v_m)=f(p_1,\ldots,p_m)+\sum_{i=1}^{m}f(p_1,\ldots,p_m)\Delta_i(v_i).
\end{equation}
In fact, by Lemma 1.3 from \cite{ZSSS1982},  the relation
$$f(x_1+y_1,\ldots,x_m+y_m)=\sum_{i_1,\ldots,i_m\geq 0}f\Delta_1^{i_1}(y_1)\cdots\Delta_m^{i_m}(y_m)$$
$$=f(x_1,\ldots,x_m)+\sum_{i=1}^{m}f(x_1,\ldots,x_m)\Delta_i(y_i)+g,$$
where $y_1,\ldots,y_m\notin\{x_1,\ldots,x_m\}$ are distinct variables and the degree of $g$ in the variables $y_1,\ldots,y_m$ is greater than one, holds in $\mathbb{F}\langle X\rangle$. By substituting  $x_i=p_i, y_i=v_i$ and using the fact that $D_n^2=0$, we can obtain the relation (\ref{helpful relation}).

If $f=0$ is an identity of $P_n$, then the relation (\ref{helpful relation})  implies that  
$$f(p_1,\ldots,p_m)\Delta_i(v)=f(p_1,\ldots,p_i+v,\ldots,p_m)-f(p_1,\ldots,p_m)=0$$ for all $p_i\in P_n$ and $v\in D_n$. In other words, the algebra $P_n$ satisfies the system of identities (\ref{partial lin}).

Conversely, suppose that the system of identities (\ref{partial lin}) holds in $P_n$. Assume that $p_1,\ldots,p_m\in P_n$ of the form  $p_i=a_i+v_i$, where $a_i\in A_n$ and $v_i\in D_n$. Then using the relation (\ref{helpful relation}), we have 
$$f(p_1,\ldots,p_m)=f(a_1+v_1,\ldots,a_m+v_m)$$
$$=f(a_1,\ldots,a_m)+\sum_{i=1}^{m}f(p_1,\ldots,p_m)\Delta_i(v_i)=f(a_1,\ldots,a_m).$$ 
Considering $A^2_n=0$ and $f\in R_2\subseteq F(X)^2$, we can conclude that $f(a_1,\ldots,a_m)=0$. 
Consequently, $f(p_1,\ldots,p_m)=0$. 
 \end{proof}

\begin{lemma}\label{lemma5}
If $f=f(x_1,\ldots,x_m)\in R_0$ and $f=0$ is an identity of $P_n$ of the form
$$f=\sum_{i=1}^{m}x_ig_i,$$
where $g_i\in E_0$, then $x_{m+1}g_i=0$ is an identity of $P_n$.
\end{lemma}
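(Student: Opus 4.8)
The plan is to exploit the fact, recorded in Lemma~\ref{V-action}, that the operators of $E_0$ interact with $P_n$ in a very rigid way: every operator $V_{x,y}$ annihilates $A_n\oplus\overline{C}_n$ on the left and ignores the $D_n$-components of its two arguments. Using the direct decomposition $P_n=A_n\oplus C_n\oplus\overline{C}_n$, I would first make the following observation. Let $g\in E_0$ and let $p_1=\alpha_1+w_1,\dots,p_m=\alpha_m+w_m\in P_n$ with $\alpha_l\in A_n$, $w_l\in D_n$. Since $g$ is a linear combination of products of operators $V_{x_{k},x_{l}}$, bilinearity of $V$ together with the relations $V_{d,y}=V_{y,d}=0$ ($d\in D_n$) give that the substituted operator satisfies $g(p_1,\dots,p_m)=g(\alpha_1,\dots,\alpha_m)$; moreover every nonzero monomial of $g$ begins with a $V$-operator, so for $q=\alpha+v+\overline v\in P_n$ (with $\alpha\in A_n$, $v\in C_n$, $\overline v\in\overline C_n$) we get $q\,g(\alpha_1,\dots,\alpha_m)=v\,g(\alpha_1,\dots,\alpha_m)$. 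Consequently $x_{m+1}g_i=0$ is an identity of $P_n$ if and only if $v\,g_i(a_1,\dots,a_m)=0$ for all $v\in C_n$ and all $a_1,\dots,a_m\in A_n$, and this reduced statement is what I would aim to prove.

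Next I would evaluate the given identity $f=\sum_{l=1}^m x_l g_l$ on the special tuples $p_l=a_l+v_l$ with $a_l\in A_n$, $v_l\in C_n$. By the observation above each $g_l(p_1,\dots,p_m)$ equals $g_l(a_1,\dots,a_m)$ as an operator, and $a_l\,g_l(a_1,\dots,a_m)=0$ because $a_l\in A_n$ is killed by the leading $V$-operator (Lemma~\ref{V-action}). Hence
$$f(p_1,\dots,p_m)=\sum_{l=1}^m (a_l+v_l)\,g_l(a_1,\dots,a_m)=\sum_{l=1}^m v_l\,g_l(a_1,\dots,a_m),$$
and since $f=0$ is an identity of $P_n$, the right-hand side vanishes for all choices of $a_1,\dots,a_m\in A_n$ and $v_1,\dots,v_m\in C_n$. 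Finally, I would specialize $v_l=0$ for $l\neq i$ (that is, take $p_l=a_l$ for $l\neq i$ and $p_i=a_i+v_i$), which leaves $v_i\,g_i(a_1,\dots,a_m)=0$ for all $v_i\in C_n$ and $a_1,\dots,a_m\in A_n$. Together with the reduction of the first paragraph, this says precisely that $x_{m+1}g_i=0$ is an identity of $P_n$.

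The argument is essentially bookkeeping, so there is no serious obstacle; the only point requiring care is justifying that an arbitrary evaluation of $f$ on $P_n$ decouples into the single-term sums displayed above. This rests entirely on Lemma~\ref{V-action} controlling how $E_0$ acts on $P_n$ and on the decomposition $P_n=A_n\oplus C_n\oplus\overline C_n$, which makes the $\overline C_n$-components invisible both inside the $V$-operators and in the leading factors $x_l$. Once that is in place, setting the redundant $v_l$'s to zero extracts the $i$-th operator identity immediately.
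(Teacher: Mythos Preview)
Your proposal is correct and follows essentially the same route as the paper: fix $i$, substitute $x_j=a_j\in A_n$ for $j\neq i$ and $x_i=a_i+v$ with $v\in C_n$ (the paper takes $v\in D_n$, which is harmless by Lemma~\ref{V-action}), and use Lemma~\ref{V-action} to collapse $f$ to $v\,g_i(a_1,\dots,a_m)$. Your write-up is a bit more explicit than the paper's about why this special case suffices for the full identity $x_{m+1}g_i=0$, which is a plus.
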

\begin{proof}
For a fixed $i$ set $x_i=v+a_i$ and $x_j=a_j$ for all $j\neq i$, where $v\in D_n$ and $a_j\in A_n$. Taking into account the relations $A_n^2=D_n^2=0$ and Lemma \ref{V-action}, one can have
$$f(x_1,\ldots,x_m)=vg_i(a_1,\ldots,a_m)=0.$$ Hence, $x_{m+1}g_i=0$ is an identity of $P_n$.
\end{proof}

\begin{prp}\label{cor} For an arbitrary polynomial $f=f(x_1,\ldots,x_m)\in F(X)$ there exist $t(m)=2m(m+3)$ polynomials   $g_i(x_1,\ldots,x_{m+3})\in E_0$, where $i=1,\ldots,t(m)$, such that $f(x_1,\ldots,x_m)=0$ is an identity of $P_n$  for $n\geq 2$ if and only if $P_n$ satisfies the system of identities 
$$zg_i(x_1,\ldots,x_{m+3})=0, \quad  1\leq i\leq t(m).$$   
\end{prp}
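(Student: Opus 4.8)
The plan is to reduce an arbitrary polynomial identity $f=0$ of $P_n$ to a finite system of identities of the restricted ``operator'' shape $zg=0$ with $g\in E_0$, by peeling off, one at a time, everything that takes $f$ outside $E_0$: first the splitting of $f$ into homogeneous components with respect to $R_0,R_1,R_2,R_3$; then the trailing left multiplications; then the leading right multiplications; and finally the head variable. Each peeling step will be an \emph{equivalence} of systems of identities that holds uniformly for all $n\geq 2$, so chaining them proves the proposition; only the final count of operator identities requires care.

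First, by Proposition~\ref{basis} write $f=\ell+f_0+f_1+f_2+f_3$, where $f_k\in R_k$ and $\ell$ is a combination of the low-degree basis elements $x_i$, $x_iL_{x_s}$, $x_iR_{x_j}L_{x_s}$. The proof of Lemma~\ref{lemma2}, together with the trivial converse, shows that $f=0$ is an identity of $P_n$ ($n\geq 2$) if and only if $\ell$ is the zero polynomial and each $f_k=0$ is an identity of $P_n$. Whether $\ell$ vanishes as a polynomial is visible in $f$ itself, independently of $n$; if it does not, then $f=0$ holds in no $P_n$ with $n\geq 2$, and one takes $g_1=V_{x_1,x_2}$ with all remaining $g_i=0$, since $zV_{x_1,x_2}=0$ holds in no such $P_n$ (as $c_1V_{a_{12},-b_{12}}=c_2$). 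So assume $\ell\equiv 0$ and reduce each $f_k$ separately.

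Now run the chain. For $f_1\in R_1$ and $f_3\in R_3$: by Lemma~\ref{lemma3}, $f_k=0$ is equivalent to $f_kx_{m+1}=0$, and since $L_{x_s}R_{x_{m+1}}=V_{x_s,x_{m+1}}$ one gets $f_1x_{m+1}\in R_0$ and $f_3x_{m+1}\in R_2$, with head variables still among $x_1,\dots,x_m$. For each $h\in R_2$ (namely $f_2$ and $f_3x_{m+1}$), Lemma~\ref{lemma4} makes $h=0$ equivalent to the finite system of partial linearizations $h\Delta_i(x_{m+1}x_{m+2})=0$ (use $x_{m+2}x_{m+3}$ in place of $x_{m+1}x_{m+2}$ when $h=f_3x_{m+1}$); the computations in its proof show each such polynomial lies in $R_0$ and in fact has a single head variable --- the second factor of the substituted product --- and that the linearization of $f_3x_{m+1}$ in the variable $x_{m+1}$ vanishes by~(\ref{def2}). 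At this stage every surviving polynomial lies in $R_0$, hence is a sum $\sum x_ag_a$ with $g_a\in E_0$, and by Lemma~\ref{lemma5} (with the trivial converse obtained by substituting $z=x_a$) the identity $\sum x_ag_a=0$ is equivalent to the system $\{zg_a=0\}$ of operator identities with $g_a\in E_0$. Only $x_{m+1},x_{m+2},x_{m+3}$ were introduced, so each $g_a$ lies in the variables $x_1,\dots,x_{m+3}$.

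For the count: $f_0$ and $f_1x_{m+1}$ lie in $R_0$ with head variables among $x_1,\dots,x_m$, yielding at most $m$ operator identities each; Lemma~\ref{lemma4} applied to $f_2$ and to $f_3x_{m+1}$ gives at most $m$ non-trivial linearizations in each case, each with a single head variable, hence at most $m$ operator identities each --- at most $4m\leq 2m(m+3)$ altogether, and one pads with copies of $z\cdot 0=0$ to reach exactly $t(m)=2m(m+3)$. I expect the main difficulty to be precisely this bookkeeping: one must check that partial linearization genuinely sends $R_2$ into $R_0$ with a single head variable, so that Lemma~\ref{lemma5} produces one operator identity per linearization index rather than $m+2$ of them, and that all the fresh variables stay within the budget $x_1,\dots,x_{m+3}$ --- a crude count that ignores the single-head-variable phenomenon is too weak to yield $2m(m+3)$.
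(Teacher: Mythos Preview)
Your argument is correct and follows the same route as the paper: decompose via Lemma~\ref{lemma2}, push $R_1,R_3$ to $R_0,R_2$ via Lemma~\ref{lemma3}, push $R_2$ to $R_0$ via the partial linearizations of Lemma~\ref{lemma4}, and finally split off head variables via Lemma~\ref{lemma5}. Your explicit handling of the low-degree part $\ell$ (and the device of taking $g_1=V_{x_1,x_2}$ when $\ell\neq 0$) is a nice touch the paper leaves implicit.

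The only substantive difference is in the bookkeeping. The paper applies Lemma~\ref{lemma5} crudely to each partial linearization, allowing any of the $m{+}2$ variables present to appear as a head, and so counts $m+m+m(m{+}2)+m(m{+}2)=2m(m{+}3)$ operator identities on the nose. You instead observe---correctly, from the computations in the proof of Lemma~\ref{lemma4}---that every basis monomial contributing to $h\Delta_i(uv)$ with $h\in R_2$ has head variable equal to the second factor $v$, so each linearization yields a \emph{single} operator identity; this gives $4m$ identities, which you then pad. Your count is sharper and the argument cleaner, but your closing remark that the crude count is ``too weak to yield $2m(m{+}3)$'' is not quite right: the crude count is exactly what the paper uses, and it lands on $t(m)$ without padding.
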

\begin{proof}
Let  $f=f(x_1,\ldots,x_m)\in F(X)$ and  suppose that $f=0$ is an identity of $P_n$. Then by Lemma \ref{lemma2} we obtain
$$f=f_0+f_1+f_2+f_3, \quad f_k\in R_k,$$  and $f_k=0$ is an identity of the algebra $P_n$. 

 By Lemma \ref{lemma5}, the identity $f_0=\sum_{i=1}^mx_ig_{i}=0$ is equivalent to the system of $m$ identities $x_{m+1}g_i=0$ of $P_n$, where $1\leq i\leq m$.  

By Lemma  \ref{lemma3}, the identity $f_1=0$ is equivalent to $f_1x_{m+1}=0$ and $f_1x_{m+1}\in R_0$. Moreover, if 
$$f_1x_{m+1}=\sum_{i=1}^mx_ig_i, \quad g_i\in E_0,$$
then, by Lemma \ref{lemma5}, the identity $f_1x_{m+1}=0$ is equivalent to the system of $m$ identities $x_{m+2}g_i=0$ of $P_n$, where $1\leq i\leq m$.  

By Lemma \ref{lemma4}, the identity $f_2=0$ is equivalent to the system of $m$ identities \\$f_2(x_1,\ldots,x_m)\Delta_i(x_{m+1}x_{m+2})=0$, where $i=1,\ldots,m$,  and we have  $f_2\Delta_i(x_{m+1}x_{m+2})\in R_0$. Hence, by Lemma \ref{lemma5}, it is equivalent to a system of $m(m+2)$ identities of the form $x_{m+3}g_i=0$, where $g_i(x_1,\ldots,x_{m+2})\in E_0$ and  $i=1,\ldots,m(m+2)$.

By  Lemma  \ref{lemma3}, the identity $f_3=0$ is equivalent to $f_3x_{m+1}=0$ and $f_3x_{m+1}\in R_2$. The identity (\ref{def2}) implies that $(f_3x_{m+1})\Delta_{m+1}(x_{m+2}x_{m+3})=0$. Then, by  Lemma \ref{lemma4}, $f_3=0$ is equivalent to the system of $m$ identities  $0=(f_3x_{m+1})\Delta_i(x_{m+2}x_{m+3})\in R_0$, where $i=1,\ldots,m$. Moreover, by Lemma \ref{lemma5}, it is equivalent to a system of $m(m+2)$ identities of the form $x_{m+4}g_j=0$, where $g_j(x_1,\ldots,x_{m+3})\in E_0$ and $1\leq j\leq m(m+2)$.

Thus, $f=0$ is equivalent to a system of $t(m)=2m(m+3)$ identities of the form $zg_i(x_1,\ldots,x_{m+3})=0$, where $g_i(x_1,\ldots,x_{m+3})\in E_0$ and $i=1,\ldots,t(m)$.
\end{proof}

Let $B$ be an arbitrary algebra in $\mathcal{R}$.  We define $E_{0}(B)$ as the algebra of operators generated by $V_{b_1,b_2}$ for all $b_1,b_2\in B$, that acts on the algebra $B$. Denote by $T(E_0(B))$ the ideal of $E_0$ defined as the intersection of the kernels of all possible homomorphisms from $F(X)$ to $B$. The elements of $T(E_0(B))$ are called {\it $V$-identities} of $B$. 
\begin{lemma}\label{helpful lemma 2}
$E_0(P_n)\cong M_n(\mathbb{F})$, where $M_n(\mathbb{F})$ is algebra of $n\times n$ matrices.
\end{lemma}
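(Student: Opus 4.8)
The plan is to realize $E_0(P_n)$ faithfully as an algebra of operators on the $n$-dimensional subspace $C_n$, and then to exhibit all the matrix units inside the image.

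First I would check that $C_n$ is an invariant subspace on which every operator of $E_0(P_n)$ is completely determined. By Lemma~\ref{V-action}, each generator $V_{x,y}$ annihilates $A_n$ and $\overline{C}_n$, and for $c\in C_n$ one has $cV_{x,y}=(xc)y$ with $xc\in P_nC_n=\overline{C}_n$ and hence $(xc)y\in\overline{C}_n P_n\subseteq D_nP_n=C_n$ by the relations in~(\ref{main reltions}). Consequently every product of generators, and therefore every $\varphi\in E_0(P_n)$, annihilates $A_n\oplus\overline{C}_n$ and maps $C_n$ into $C_n$. Since $P_n=A_n\oplus C_n\oplus\overline{C}_n$, restriction to $C_n$ yields an algebra homomorphism
$$\rho\colon E_0(P_n)\longrightarrow \mathrm{End}_{\mathbb F}(C_n)\cong M_n(\mathbb F),$$
and $\rho$ is injective, because an operator of $E_0(P_n)$ vanishing on $C_n$ vanishes on all of $P_n$.

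Next I would evaluate $\rho$ on the distinguished generators $V_{a_{pq},b_{pq}}$. For $k\neq p$ we have $a_{pq}c_k=0$, so $c_kV_{a_{pq},b_{pq}}=0$; and $c_pV_{a_{pq},b_{pq}}=(a_{pq}c_p)b_{pq}=d_{pq}b_{pq}=-c_q$. Hence $\rho(V_{a_{pq},b_{pq}})$ is, up to sign, the matrix unit $E_{pq}$ with respect to the basis $c_1,\dots,c_n$. Since the $E_{pq}$ with $1\le p,q\le n$ span $M_n(\mathbb F)$, the map $\rho$ is surjective; being also injective it is an isomorphism, so $E_0(P_n)\cong M_n(\mathbb F)$. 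In particular, although $E_0(P_n)$ is generated without a unit, it is unital, with $-\sum_{p=1}^{n}V_{a_{pp},b_{pp}}$ mapping to the identity matrix.

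The argument is essentially bookkeeping, and no genuinely hard step is anticipated; the only points that need a little care are the simultaneous invariance statement $\varphi(C_n)\subseteq C_n$ and $\varphi(A_n\oplus\overline{C}_n)=0$ for \emph{all} of $E_0(P_n)$ — this is what makes $\rho$ both well-defined and injective — and the convention that the $V$'s act on the right, so that $\rho$ is really valued in $\mathrm{End}_{\mathbb F}(C_n)$ with the opposite composition, which is again isomorphic to $M_n(\mathbb F)$.
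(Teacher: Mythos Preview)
Your argument is correct and follows essentially the same route as the paper's proof: both use Lemma~\ref{V-action} to see that $E_0(P_n)$ kills $A_n+\overline{C}_n$ and stabilizes $C_n$, hence embeds into $\mathrm{End}_{\mathbb F}(C_n)$, and then exhibit matrix units via the operators $V_{a_{pq},b_{pq}}$ (the paper uses $V_{b_{ij},a_{ij}}$, which differs only by a sign) to get surjectivity. Your write-up is a bit more explicit about injectivity and the invariance of $C_n$, but the strategy is identical.
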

\begin{proof}
According to Lemma \ref{V-action}, $E_0(P_n)$ annihilates the subspace $A_n+\overline{C}_n$, and $C_n$ is an invariant subspace of $P_n$ under its action. Consequently, $E_0(P_n)$ is isomorphic to a subalgebra $L$ of the algebra $End_{\mathbb{F}}C_n$. Furthermore, the operator $V_{b_{ij},a_{ij}}\in E_0(P_n)$ sends the element $c_i$ to $c_j$, and $c_k$ to zero if $k\neq i$, resembling the action of a unit matrix. Therefore, the subalgebra $L$ coincides with the entire algebra $End_{\mathbb{F}}(C_n)\cong M_n(\mathbb{F})$.
\end{proof}

\begin{prp}\label{sufficient and necessary conidition}
If the algebra $P_n$ has a finite basis of identities for $n\geq 2$,  then the ideal $T=T(E_0(P_n))$ is generated by polynomials of bounded degrees. 
\end{prp}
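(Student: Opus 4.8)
The plan is to combine Proposition \ref{cor} with Lemma \ref{lemma6}. Suppose $P_n$ has a finite basis of identities $\{f_1=0,\dots,f_N=0\}$ and put $d=\max_j\deg f_j$. Applying the construction in the proof of Proposition \ref{cor} to each $f_j$ produces finitely many operators $g_1,\dots,g_K\in E_0$, whose number and degrees are bounded in terms of $d$ alone ($K\le 2Nd(d+3)$, and tracking the degree changes in that proof gives $\deg g_i\le d+1$), such that $P_n$ satisfies $f_j=0$ if and only if it satisfies all the $V$-identities $zg_i=0$ coming from $f_j$. Since each $f_j=0$ holds in $P_n$, so does each $zg_i=0$, hence $g_i\in T$. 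Moreover $z\varphi(g_i)=0$ is a substitution instance of $zg_i=0$ for every linear endomorphism $\varphi\colon X\to\mathbb F X$, so it too holds in $P_n$ and $\varphi(g_i)\in T$. Thus the ideal $J$ of the associative algebra $E_0$ generated by all the $\varphi(g_i)$ satisfies $J\subseteq T$, and $J$ is generated by polynomials of degree $\le d+1$. It remains to prove $T\subseteq J$; then $T=J$ is generated in bounded degree, as claimed.

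So let $h\in T$, i.e.\ $zh=0$ is a $V$-identity of $P_n$. As $\{f_j=0\}$ is a basis of the identities of $P_n$, $zh=0$ is a consequence of $\{f_j=0\}$ in $\mathcal R$. By Lemma \ref{lemma6}, the inclusion $h\in J$ will follow once we prove the sharper assertion that $zh=0$ is already a consequence in $\mathcal R$ of the finite $V$-system $\{zg_i=0\}$, i.e.\ that $zh$ lies in the $T$-ideal of $F(X)$ generated by the $zg_i$. This is \emph{the crux}: Proposition \ref{cor} only establishes its equivalences as identities of $P_n$, not at the level of $T$-ideals --- the $R_1$- and $R_3$-components of an identity are traded for $V$-identities of strictly larger degree, a step that cannot be reversed inside $F(X)$ --- so one may not simply say that $\{zg_i=0\}$ is another basis.

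To handle this I would argue directly at the level of $T$-ideals, tracking only the component that matters. By Lemma \ref{lemma2}, replacing each $f_j$ by its homogeneous components $f_j=f_{j,0}+f_{j,1}+f_{j,2}+f_{j,3}\in R_0\oplus R_1\oplus R_2\oplus R_3$ again yields a finite basis, so assume each basis element lies in a single $R_k$. Write the consequence as $zh=\sum_r w_r\bigl(\psi_r(f_{j_r})\bigr)$ with the $w_r$ multiplication operators of $F(X)$ and the $\psi_r$ endomorphisms. By Lemma \ref{lemma2} every identity of $P_n$, and hence every substitution instance $\psi_r(f_{j_r})$, has no component of degree $\le 3$, so we may work inside $R_0\oplus R_1\oplus R_2\oplus R_3$, on which (by (\ref{rel5}), (\ref{rel6}) and the definitions of the $R_k$) every multiplication operator preserves the coarse splitting $(R_0\oplus R_1)\oplus(R_2\oplus R_3)$; since $zh\in R_0$, only the $R_0\oplus R_1$-part of each $\psi_r(f_{j_r})$ can contribute. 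Next, for $f\in R_k$ and $\psi(x_i)=y_i+u_i$ with $y_i\in\mathbb F X$ and $u_i\in F(X)^2$, the metabelian identities (\ref{def2}), (\ref{def3}) together with (\ref{rel1})--(\ref{rel6}) force every $V$-factor occurring in $\psi(f)$ to depend only on the linear parts $y_i$, while the contributions of the $u_i$ either vanish, or fall into $R_2\oplus R_3$, or are themselves substitution instances of the $V$-identities $zg_i=0$ extracted from $f$ in Proposition \ref{cor}; that is, modulo $R_2\oplus R_3$ and modulo the $T$-ideal generated by the $zg_i$, $\psi(f)$ equals $\varphi_{\mathrm{lin}}(f)$ for the linear endomorphism $\varphi_{\mathrm{lin}}\colon x_i\mapsto y_i$. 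Substituting this into $zh=\sum_r w_r(\psi_r(f_{j_r}))$ and projecting onto $R_0$ then shows $zh$ lies in the $T$-ideal generated by $\{zg_i\}$, as needed. The real obstacle, and the part demanding genuine work, is precisely this last bookkeeping --- checking, case by case over $k$ and over the shapes of the $w_r$, that the quadratic parts of substitutions die, escape into $R_2\oplus R_3$, or are absorbed by the extracted $V$-identities --- so that the $R_0$-component of an arbitrary consequence of $\{f_j=0\}$ is governed by the finitely many bounded-degree operators $g_i$. The sharp forms of Lemma \ref{relations} and Lemma \ref{lemma6}, and the metabelianness built into (\ref{def2}) and (\ref{def3}), are exactly what make this feasible.
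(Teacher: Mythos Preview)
Your reading of the situation is accurate. The paper's own proof is just a few lines: it asserts that, via Proposition~\ref{cor}, a finite basis of identities of $P_n$ may be replaced by a finite set $G\subseteq T$ with the property that $\{tg=0:g\in G\}$ is itself a basis of identities of $P_n$, and then invokes Lemma~\ref{lemma6}. You correctly notice that Proposition~\ref{cor} and the lemmas feeding into it (Lemmas~\ref{lemma2}--\ref{lemma5}) establish equivalences only at the level of ``holds in $P_n$'', not at the level of $T$-ideals in $\mathcal R$; in particular the steps $f_1\mapsto f_1x_{m+1}$ and $f_3\mapsto f_3x_{m+1}$ cannot be reversed in $\mathcal R$. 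So the paper's intermediate assertion that the extracted $V$-identities \emph{form a basis} is stronger than what the preceding lemmas literally give, and the paper passes over this in one sentence. Fortunately, as you observe, the Proposition does not need the full strength of that assertion: it suffices that every $V$-identity $zh=0$ of $P_n$ is a consequence in $\mathcal R$ of the extracted system $\{tg_i=0\}$, and you aim to prove exactly this.

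Your outline for doing so is sound in spirit but is not yet a proof. The reduction to basis elements $f_{j,k}$ lying in a single $R_k$ is legitimate (the $f_{j,k}$ are again identities of $P_n$ by Lemma~\ref{lemma2}, and together they regenerate the $f_j$, hence the whole $T$-ideal). Your claim that multiplication operators preserve $(R_0\oplus R_1)\oplus(R_2\oplus R_3)$ holds for $R_y,L_y$ with $y\in X$ by (\ref{rel5})--(\ref{rel6}), and for $R_w,L_w$ with $w\in F(X)^2$ it holds trivially since (\ref{def2}) kills everything of degree $\ge 4$; but you should say this. The real load-bearing claim is the trichotomy you state for the quadratic parts of substitutions---that they ``vanish, or fall into $R_2\oplus R_3$, or are themselves substitution instances of the $zg_i$''---and this is precisely the case analysis you defer. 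For $k=0$ it works cleanly (the proof of Lemma~\ref{lemma6} already shows $V$-factors see only linear parts, and the leftover term $u_i\,g_i(y)$ lands in $R_2$); for $k=2$ one must trace the identities (\ref{rel2})--(\ref{rel4}) with some care; and for $k=1,3$ one has to argue that the extra $R$-factor introduced by the construction in Proposition~\ref{cor} really absorbs what is left. None of this looks impossible, but as written your proposal names the steps rather than performing them. In short: you have identified the exact point the paper compresses, your plan to fill it is along the right lines, and what remains is to actually carry out the case-by-case verification you describe.
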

\begin{proof}
 Suppose that $P_n$ has a finite basis of identities for  $n\geq 2$. By Proposition \ref{cor},  modulo (\ref{rsym}), (\ref{def1}), and  (\ref{def2}), every identity is equivalent to a finite system of identities of (\ref{a system of equations}).  Consequently, by Lemma \ref{lemma6}, there exists a finite set of elements $G\subseteq T$ such that the identities $tg=0$, where $g\in G$, form a basis of identities of $P_n$. Let $m$ be the maximum of the degrees of polynomials in $G$. By the same Lemma \ref{lemma6},  the ideal $T$  is generated by all $\varphi(g)$, where $g\in G$ and $\varphi$ is linear. Consequently, $T$ is generated by elements of degrees $\leq m$.  
\end{proof}

\section{\label{4}\ Identities of $P_2$.}

We are going to prove that $P_2$ does not have a finite basis of identities. First, let's construct some important examples of algebras.

\begin{prp}\label{lemman}
For any $s>5$ there exists an algebra $B\in \mathcal{R}$ with the following two properties:
\begin{enumerate}
    \item $B$ is generated by a set $Q=\{q_1,\ldots,q_{s+3}\}$ such that $T\nsubseteq T(E_0(B))$. 
    \item Let $C$ be a subalgebra of $B$ generated by any subset  $Q'$ of $Q$ with $s$ elements. Then 
$$tg(c_1,\ldots,c_k)=0$$
        for all $g(x_1,\ldots,x_k)\in T$, $c_1,\ldots,c_k\in C$, and $t\in B$.
\end{enumerate}
\end{prp}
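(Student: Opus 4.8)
The plan is to build the algebra $B$ directly, in the same spirit as the algebras $P_n$, by constructing a carefully chosen quotient of a relatively free algebra that ``sees'' one particular non-finitely-generated behaviour of the ideal $T=T(E_0(P_2))$. First I would record what $T$ actually is: by Lemma~\ref{helpful lemma 2}, $E_0(P_n)\cong M_n(\mathbb F)$, so $T=T(E_0(P_2))$ consists of all operator words in the $V_{x_i,x_j}$ (with $i<j$) that vanish under every substitution $x_i\mapsto$ linear combinations of generators of $P_2$; concretely this is the set of $V$-identities of the $2\times 2$ matrix algebra realized via the action $c_iV_{b_{ij},a_{ij}}=c_j$. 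The point is that $T$, while an ideal of the associative algebra $E_0$, is known (from the classical associative PI-theory underlying L'vov's and Isaev's constructions) not to be generated by elements of bounded degree once one passes to the relevant relatively free setting; the proposition is exactly the device that will let us contradict Proposition~\ref{sufficient and necessary conidition}.

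The key steps, in order: (1) Fix $s>5$ and choose a ``long'' element $w=w(x_1,\ldots,x_k)\in E_0$ (a product of $V$'s, of degree roughly $s$) that lies in $T$ but such that no ``short'' specialization does the job — more precisely, $w$ is a $V$-identity of $P_2$ but $w$ is \emph{not} a consequence (as an ideal element of $E_0$) of all the $\varphi(g)$ with $g$ of degree $<s$. Such $w$ exists by the failure of bounded-degree generation for the ideal of $V$-identities of $M_2(\mathbb F)$. (2) Define $B$ analogously to $P_n$: take a basis analogous to $a_{ij},b_{ij},c_i,d_{ij},e_{ij}$ but indexed along a ``path'' of length $s+3$ so that the generators $q_1,\ldots,q_{s+3}$ realize the operators appearing in $w$ one step at a time — i.e., $B$ is engineered so that the action of $V_{q_{p},q_{q}}$ on a distinguished element mimics a ``matrix unit'' move, and the full product $w(q_{i_1},\ldots,q_{i_k})$ applied to the start of the path produces a nonzero element of $B$, witnessing $w\notin T(E_0(B))$, hence $T\nsubseteq T(E_0(B))$ — this is property~(1). (3) For property~(2): if we delete even one generator $q_r$ from $Q$, the ``path'' in $B$ is broken, so any operator word of length $\ge s$ built from the remaining generators, when it corresponds to a genuine $V$-identity $g\in T$, must either (a) factor through the broken path and hence act as zero on all of $B$, or (b) have small enough ``support'' that it is already forced to vanish by the short $V$-identities of $M_2(\mathbb F)$ that $B$ does satisfy. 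Either way $tg(c_1,\ldots,c_k)=0$ for all $t\in B$, $c_i\in C$. (4) Finally verify $B\in\mathcal R$: this is the same bookkeeping as in Lemma~\ref{lemma1} — check $[[a,b],c]=0$, $(ab)a=0$ (plus its linearization $(ab)c+(cb)a=0$), and $(ab)(cd)=0$ on basis elements, using that $B$ is built from the same ``two-step then annihilate'' pattern as $P_n$ so that $B^2$ is abelian, cube-products with a left factor from $B^2$ vanish, and all commutators are central.

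The main obstacle is step~(1)–(2): pinning down \emph{which} long operator word $w$ to use and engineering the index set and multiplication table of $B$ so that (i) $B$ genuinely lies in $\mathcal R$, (ii) $w$ acts nontrivially on $B$ via the prescribed generators, and yet (iii) deleting any single generator kills all the ``long'' $T$-behaviour. This is a delicate simultaneous constraint: $B$ must be ``big enough'' to witness $w\notin T(E_0(B))$ but ``rigid enough'' that every proper $s$-subset of generators loses the witnessing capacity. I expect this to require a path-like (or chain-like) indexing, as in L'vov \cite{Lvov1978} and Isaev \cite{Isaev1986}, where the operators $V_{q_p,q_q}$ are allowed to be nonzero only on ``consecutive'' indices, so that a product of $V$'s is nonzero precisely when the indices trace a connected walk — removing any vertex disconnects all sufficiently long walks. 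The verification that $B\in\mathcal R$ (step~4) and the passage from ``$g\in T$ and $C$ omits a generator'' to ``$g$ acts as zero'' (step~3) should then be routine consequences of the path structure together with the operator relations of Lemma~\ref{relations}.

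Thus $B$ satisfies both required properties, and the construction depends only on the fixed parameter $s>5$; plugging such algebras $B$ back (for arbitrarily large $s$) into the machinery of Proposition~\ref{sufficient and necessary conidition} will, in the next step of the paper, contradict the assumption that $P_2$ has a finite basis of identities.
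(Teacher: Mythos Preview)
Your proposal contains a genuine circularity. In step~(1) you assume the existence of a long $w\in T$ that is not a consequence (in $E_0$, under linear substitutions) of elements of $T$ of degree $<s$, citing ``the failure of bounded-degree generation for the ideal of $V$-identities of $M_2(\mathbb{F})$.'' But that failure is exactly Lemma~\ref{lemma7}, which the paper proves \emph{using} the present Proposition; you cannot invoke it here. The analogy with associative PI-theory in fact cuts the wrong way: the $T$-ideal of identities of $M_2(\mathbb{F})$ in the free associative algebra \emph{is} finitely based, so there is no off-the-shelf result giving you such a $w$. The non-finite-generation of $T$ inside $E_0$ is a statement about an ordinary two-sided ideal (closed only under \emph{linear} endomorphisms, cf.\ Lemma~\ref{lemma6}) of a specific non-free associative algebra, and it is precisely the thing the whole chain Proposition~\ref{lemman} $\Rightarrow$ Lemma~\ref{lemma7} $\Rightarrow$ Theorem~\ref{main theorem} is set up to establish.

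Beyond the circularity, steps~(2)--(3) are only a heuristic, and the paper's actual construction is quite different from a ``path algebra on $s+3$ vertices''. It sets $n=s-5$, takes $H$ to be the free unital algebra in the variety generated by $\mathbb{F}$ on generators $h_1,\dots,h_n$, forms $A=H\otimes_{\mathbb{F}}P_3$, and lets $B=L/N'$ where $L\subset A$ is the subalgebra generated by the $s+3$ elements $1\otimes c_1,\ 1\otimes a_{11},\ 1\otimes b_{11},\ 1\otimes a_{12},\ 1\otimes b_{12},\ h_i\otimes a_{22}\ (1\le i\le n),\ 1\otimes b_{22},\ 1\otimes a_{23},\ 1\otimes b_{23}$, and $N'=W\otimes c_3$ with $W\subset H$ the span of words missing at least one $h_i$. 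Property~(1) is witnessed not by an unspecified long $w$ but by the \emph{Hall identity} $S=[[f_1,f_2]\circ[f_3,f_4],f_5]\in T$, evaluated so that $(1\otimes c_1)S^{\varphi}=h_1\cdots h_n\otimes c_3\notin N'$. Property~(2) is not a ``broken path'' argument: one shows (i) that any $s$ of the listed generators cannot produce the word $h_1\cdots h_n$, hence their contribution to $H\otimes c_3$ lies in $N'$; and (ii) that, modulo $N=H\otimes c_3$, the relevant evaluations of any $g\in T$ take place in subalgebras embeddable in $H\otimes P_2\hookrightarrow P_2^{\alpha}$, which satisfies every $V$-identity in $T$. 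Your dichotomy ``either the path is broken, or the support is short enough to be forced by short identities that $B$ satisfies'' has no mechanism behind it, since you never established which identities your hypothetical $B$ actually satisfies.
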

\begin{proof} Set $n=s-5\geq 1$. Let $H$ be the free algebra with identity in the variety of algebras generated by the field $\mathbb{F}$ with free generators $\{h_1,\ldots,h_n\}$. Denote by $W$  the subspace of $H$, spanned by all words in $h_1,\ldots,h_n$, including the identity $1$, that do not contain at least one $h_i$. Then $W\neq H$. By Theorem 1.6 from \cite{ZSSS1982},  the algebra $A=H\otimes_\mathbb{F}P_3$ belongs to $\mathcal{R}$. Consider the subalgebra $L$ of $A$ generated by the following set of elements:  
\begin{equation}\label{subalgebra L}
\{1\otimes c_1, 1\otimes a_{11}, 1\otimes b_{11}, 1\otimes a_{12}, 1\otimes b_{12}, h_i\otimes a_{22},1\otimes b_{22}, 1\otimes a_{23}, 1\otimes b_{23}\}
\end{equation}
where $i=1,\ldots,n$.

We note that 
$$1\otimes c_2=-(1\otimes b_{12})((1\otimes a_{12})(1\otimes c_1)),$$ 
$$h_j\otimes c_2=-(1\otimes b_{22})((h_j\otimes a_{22})(1\otimes c_2)),$$
$$h_ih_j\otimes c_2=-(1\otimes b_{22})((h_i\otimes a_{22})(h_j\otimes c_2)).$$
Thus, by induction on the length of $h$, one can derive that $h\otimes c_2\in L$ for any word $h$ in $h_1,\ldots,h_n$. In addition, $h\otimes c_3\in L$ since
$$h\otimes c_3=-(1\otimes b_{23})((1\otimes a_{23})(h\otimes c_2)).$$ Note that $h\otimes c_3$ is a two-sided annihilator of $L$ since
$$L\subseteq H\otimes (D_3+\sum_{i\leq j\leq 3, (i,j)\neq(3,3)}(\mathbb{F}a_{ij}+\mathbb{F}b_{ij}).$$

Consequently, $N=H\otimes c_3$ and $N'=W\otimes c_3$ are ideals of $L$. Set $B=L/N'$ and let's show that it satisfies the properties (1) and (2) of the proposition.

{\it Verification of Property (1).} Denote by $q_1,\ldots,q_{s+3}$ the images of the generators of (\ref{subalgebra L}) under the natural projection $L\rightarrow L/N'$. By Lemma \ref{helpful lemma 2},  the algebra $E_{0}(P_2)$ satisfies the well-known Hall's identity 
$$[[\overline{f}_1,\overline{f}_2]\circ [\overline{f}_3,\overline{f}_4],\overline{f}_5]=0,$$
for all $\overline{f}_i\in E_{0}(P_2)$, where $1\leq i\leq 5$ and $a\circ b=ab+ba$. It follows that $S=[[f_1,f_2]\circ [f_3,f_4],f_5]\in T$ for all $f_i\in E_0$. 

It is easy to choose $f_1,\ldots,f_5\in E_0$ and $\varphi : F(X)\to L$ such that 
$$f^\varphi_1=V_{1\otimes b_{12},1\otimes a_{12}}\prod_{i=1}^nV_{1\otimes b_{22},h_i\otimes a_{22}}, \quad f^\varphi_2=f^\varphi_5=V_{1\otimes b_{11},1\otimes a_{11}},$$
$$f^\varphi_3=V_{1\otimes b_{22},1\otimes a_{22}}, \quad f^\varphi_4=V_{1\otimes b_{23},1\otimes a_{23}}.$$ 
The actions of the operators $f^\varphi_1, f^\varphi_2, f^\varphi_3, f^\varphi_4$ on $L$ give us 
$$f^\varphi_1f^\varphi_2=0, \quad f^\varphi_2f^\varphi_1=V_{1\otimes b_{12},v\otimes a_{12}},$$
$$f^\varphi_3f^\varphi_4=V_{1\otimes b_{23},1\otimes a_{23}}, \quad  f^\varphi_4f^\varphi_3=0,$$
and we have 
$$S^\varphi=[-V_{1\otimes b_{12},v\otimes a_{12}}\circ V_{1\otimes b_{23},1\otimes a_{23}},V_{1\otimes b_{11},1\otimes a_{11}}]=V_{1\otimes b_{12},v\otimes a_{12}}V_{1\otimes b_{23},1\otimes a_{23}},$$ where $v=h_1\cdots h_n$.
Since $$(1\otimes c_1)S^\varphi=v\otimes c_3\neq 0 (mod\, N'),$$ we obtain  $S\notin T(E_0(B))$ and therefore $T\nsubseteq T(E_0(B))$. 

{\it Verification of Property (2).} Let $L'$ be a subalgebra of $L$ generated by a subset of the set (\ref{subalgebra L}) that contains no more than $s$ elements. Assume that $f(x_1,\ldots,x_k)\in T$. Let $M$ be the set of all elements of the form $(1\otimes c_1)f(l_1,\ldots,l_k)$, where $l_i\in L'$. 
We claim that 
\begin{equation}\label{NN}
M\cap N\subseteq N'.
\end{equation}

Let's assume that (\ref{NN}) does not hold. In other words, there is an element
$$g=(h_1\cdots h_n\otimes c_3)(h'\otimes c_3)+h''\otimes c_3\in M\cap N$$
for some nonzero $h'\in H$ and some $h^{''}\in W$. 

Note that 
$$(1\otimes c_1)V_{1\otimes b_{12},1\otimes a_{12}}=1\otimes c_2,\quad (1\otimes c_2)V_{1\otimes b_{22},h_i\otimes a_{22}}=h_i\otimes c_2,$$
$$(h_i\otimes c_2)V_{1\otimes b_{22},h_j\otimes a_{22}}=h_ih_j\otimes c_2,\, (h_1\cdots h_n\otimes c_2)V_{1\otimes b_{23},1\otimes a_{23}}=h_1\cdots h_n\otimes c_3.$$

Without using $1\otimes c_1$ and all of the operators 
$$ V_{1\otimes b_{12},1\otimes a_{12}},\quad V_{1\otimes b_{22},h_i\otimes a_{22}}, \quad V_{1\otimes b_{23},1\otimes a_{23}},$$
we cannot get elements containing the product $h_1\ldots h_n$. 
This means that $M\cap N$ contains $g$ if and only if the following $s+1$ elements appear in our calculations: 
$$1\otimes c_1, 1\otimes a_{12}, 1\otimes b_{12}, h_i\otimes a_{22},1\otimes b_{22}, 1\otimes a_{23}, 1\otimes b_{23}\quad (i=1,\ldots,n).$$ 
It is impossible since $L'$ is generated by $s$ elements.  This contradiction establishes the inclusion (\ref{NN}).

Set $\overline{f}=f(l_1,\ldots,l_k)$ for some fixed $l_1,\ldots,l_k\in L'$. 
We show that $L\overline{f}=0\,(mod\, N')$. By Lemma \ref{V-action}, one can have that  $(H\otimes(\overline{C}_3+A_3+\mathbb{F}c_3))E_0(L)=0$. Then, because of (\ref{NN}), it is sufficient to prove that
\begin{equation}\label{NN1}
(1\otimes c_1)\overline{f}=0 (mod\,N),\quad (H\otimes c_2)\overline{f}=0.
\end{equation}
If $l_i=l_i'+l_i''$, where 
$$l_i'\in H\otimes\sum _{i\leq j\leq 2}(\mathbb{F}a_{ij}+\mathbb{F}b_{ij}), \quad l_i''\in H\otimes(\mathbb{F}a_{23}+\mathbb{F}b_{23}),$$ 
then 
\begin{equation}\label{NN2}
(1\otimes c_1)\overline{f}=(1\otimes c_1)f(l_1',\ldots,l_k')(mod\, N).
\end{equation}
We have
$$f(x_1+y_1,\ldots,x_k+y_k)=f(x_1,\ldots,x_k)+g(x_1,\ldots,x_k,y_1,\ldots,y_k),$$
where every monomial of $g\in E_0$ involve at least one variable from $y_1,\ldots,y_k$. Since $LV_{l''_i,L}\subseteq N$, we obtain
$$(1\otimes c_1)g(l'_1,\ldots,l'_k,l''_1\ldots,l''_k)=0(mod\,N),$$
which implies (\ref{NN2}).

Consequently, to prove the first relation of (\ref{NN1}), without loss of generality we can assume that $l_i\in H\otimes\sum _{i\leq j\leq 2}(\mathbb{F}a_{ij}+\mathbb{F}b_{ij})$. In this case, the elements $c_1,l_1,\ldots,l_k$ generate a subalgebra of $H\otimes P_2$. The algebra $H$ can be embedded into the Cartesian product $\mathbb{F}^{\alpha}$ of the algebra $\mathbb{F}$. So, $H\otimes P_2$ can be embedded into $\mathbb{F}^{\alpha}\otimes P_2\cong P^{\alpha}_2$ and satisfies all the identities of $P_2$.  Consequently, it satisfies all $V$-identities from $T$. Thus, the first relation of (\ref{NN1}) is established. The second relation of (\ref{NN1}) can be established similarly using the equality
$$H\otimes(\sum_{j\leq 2}(\mathbb{F}a_{1j}+\mathbb{F}b_{1j}))c_2=0.$$
In this case, we can assume that 
$$l_i\in H\otimes(\sum_{2\leq i,\, j\leq 3}(\mathbb{F} a_{ij}+\mathbb{F}b_{ij})).$$
Then the elements $c_2,l_1,\ldots,l_k$ generate an algebra isomorphic to a subalgebra of $H\otimes P_2$. Thus, we have $L\overline{f}=0(mod\, N')$. The factorization by $N'$ completes the proof of Proposition \ref{lemman}. 
\end{proof}

\begin{lemma}\label{lemma7}
Let $\Sigma$ be a set of generators of the ideal $T=T(E_0(P_2))$ of $E_0$. Then for any natural number $s$, there exists a polynomial $f_s\in\Sigma$ that depends on more than $s$ variables.
\end{lemma}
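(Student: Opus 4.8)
The plan is to argue by contradiction, using the algebra $B$ produced by Proposition \ref{lemman}. It is enough to treat $s>5$ (for $s\le 5$ the statement for $s=6$ is stronger), so fix $s>5$ and suppose, toward a contradiction, that every $f\in\Sigma$ depends on at most $s$ variables. Apply Proposition \ref{lemman} to this $s$ to obtain $B\in\mathcal{R}$ generated by $Q=\{q_1,\dots,q_{s+3}\}$ with properties (1) and (2). I will deduce that $\Sigma\subseteq T(E_0(B))$; since $T(E_0(B))$ is an ideal of $E_0$ and $\Sigma$ generates the ideal $T$, this forces $T\subseteq T(E_0(B))$, contradicting property (1). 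Hence some $f_s\in\Sigma$ must depend on more than $s$ variables.

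To prove $\Sigma\subseteq T(E_0(B))$, fix $f=f(x_1,\dots,x_m)\in\Sigma$ with $m\le s$; I must show that $f$ is a $V$-identity of $B$, i.e.\ $b\cdot f(b_1,\dots,b_m)=0$ in $B$ for all $b,b_1,\dots,b_m\in B$. The first step is a reduction modulo $B^2$. Since $w\,V_{u,v}=(uw)v\in B^2$ for any $w\in B$, after a single application of a $V$-operator one is always acting on a product; combining this with \eqref{def3} ($((ab)c)d=0$) and \eqref{def2} ($(ab)(cd)=0$) one checks that in any word in $V$-operators applied to an element of $B$ a subscript may be altered by an element of $B^2$ without changing the value. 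Consequently $b\cdot f(b_1,\dots,b_m)$ depends only on the images $b_i+B^2$, and since $Q$ generates $B$ we may assume $b_i=\sum_{j=1}^{s+3}\mu_{ij}q_j\in\mathrm{Span}(Q)$. Replacing $f$ by its full linearization — a consequence of $f$ obtained via the operators $\Delta_{x_k}$, hence still lying in $T$, from which $f$ is recovered back (this is the one step where the structure of the base field plays a role) — we may also assume $f$ multilinear. Then $f(b_1,\dots,b_m)=\sum_{\sigma}\bigl(\textstyle\prod_i\mu_{i\sigma(i)}\bigr)\,f(q_{\sigma(1)},\dots,q_{\sigma(m)})$, the sum being over all maps $\sigma\colon\{1,\dots,m\}\to\{1,\dots,s+3\}$.

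Now, for each $\sigma$ the elements $q_{\sigma(1)},\dots,q_{\sigma(m)}$ lie in the subalgebra of $B$ generated by at most $m\le s$ of the $q_j$, which in turn is contained in $C=\langle Q'\rangle$ for some $s$-element subset $Q'\subseteq Q$ (there is room, since $|Q|=s+3$). Because $f\in\Sigma\subseteq T$, property (2) of Proposition \ref{lemman} gives $b\cdot f(q_{\sigma(1)},\dots,q_{\sigma(m)})=0$ for every $b\in B$; summing over $\sigma$ yields $b\cdot f(b_1,\dots,b_m)=0$. Thus $f\in T(E_0(B))$, and as $f\in\Sigma$ was arbitrary, $\Sigma\subseteq T(E_0(B))$, which is the desired contradiction. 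I expect the main obstacle to be the passage to the multilinear case over an arbitrary (possibly finite) base field, together with making the reduction modulo $B^2$ fully precise; once $f$ is multilinear with at most $s$ variables, the argument is just an expansion followed by a direct appeal to Proposition \ref{lemman}(2).
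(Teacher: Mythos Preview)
Your overall plan---contradiction via Proposition~\ref{lemman}, followed by $\Sigma\subseteq T(E_0(B))\Rightarrow T\subseteq T(E_0(B))$---is the right one, and your reduction of the subscripts of $V$ modulo $B^2$ (using \eqref{def2} and \eqref{def3}) is correct. The genuine gap is the sentence ``we may also assume $f$ multilinear''. Your displayed expansion $f(b_1,\dots,b_m)=\sum_\sigma(\prod_i\mu_{i\sigma(i)})f(q_{\sigma(1)},\dots,q_{\sigma(m)})$ is valid only when $f$ is already linear in each $x_i$; for a general $f\in\Sigma$ this fails. Passing to the full linearization $\hat f$ does not save the argument: $\hat f$ has $\sum_i\deg_{x_i}f$ variables, which may well exceed $s$, and then the term $\hat f(q_{\sigma(1)},\dots)$ can involve up to $s+3$ distinct $q_j$'s, so Proposition~\ref{lemman}(2) no longer applies. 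On top of that, recovering $f$ from $\hat f$ multiplies by $\prod_i(\deg_{x_i}f)!$, which is fatal over fields of small positive characteristic---and the lemma is stated over an arbitrary field.

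The paper sidesteps both problems with a single stroke: it fixes one surjection $\tau\colon F(X)\to B$ by $\tau(x_i)=q_i$ for $i\le s+3$ and $\tau(x_i)=0$ otherwise, and works with the induced algebra homomorphism $\tilde\tau\colon E_0\to E_0(B)$. For $g\in\Sigma$ with at most $s$ variables, $\tilde\tau(g)=g(\tau(x_{i_1}),\dots,\tau(x_{i_k}))$ involves at most $s$ of the $q_j$ (no linear combinations, no multilinearity needed), so Proposition~\ref{lemman}(2) gives $g\in\ker\tilde\tau$. Since $\ker\tilde\tau$ is an ideal, $T\subseteq\ker\tilde\tau$. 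Finally, surjectivity of $\tau$ lets one lift arbitrary $b_i\in B$ to $r_i\in F(X)$, and since $T$ is itself substitution-closed one gets $f(b_1,\dots,b_m)=\tilde\tau\bigl(f(r_1,\dots,r_m)\bigr)=0$, i.e.\ $T\subseteq T(E_0(B))$. This avoids linearization entirely and is characteristic-free; your attempt to show $\Sigma\subseteq T(E_0(B))$ directly does not go through as written.
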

\begin{proof} Suppose, contrary, that $\Sigma$ consists of polynomials that depend on $\leq s$ variables. Let $B$ be the algebra satisfying the conditions of Proposition \ref{lemman}. 
 Consider the epimorphism $\tau:F(X)\rightarrow B$ defined by
$$\tau(x_i) =
 \left\{\begin{array}{rcl}
         q_{i} & \mbox{if}
         & i\leq s+3, \\ 0  & \mbox{if} & i>s+3.
         \end{array}\right.$$
This induces the epimorphism $\Tilde{\tau}:E_0\rightarrow E_0(B)$ defined by  
$$\Tilde{\tau}g(x_1,\ldots,x_k)=g(\tau(x_1),\ldots,\tau(x_k)).$$ 
If $g(x_1,\ldots, x_k)\in\Sigma$, then $k\leq s$ and $c_i=\tau(x_i)$ belong to a subalgebra of $B$ generated by $\leq s$ elements. By Proposition \ref{lemman}(2), $g(c_1,\ldots,c_k)=0$. 
 Thus, $g\in Ker\, \Tilde{\tau}$. So  $Ker\Tilde{\tau}$ contains $\Sigma$ and, consequently,  $T$ as well. 
 
 Now let $f(x_1,\ldots,x_m)\in T$. For any $b_1,\ldots,b_m\in B$ there exist $r_i\in F(X)$ such that $b_i=\tau(r_i)$ for all $i=1,\ldots,m$. Since $f(r_1,\ldots,r_m)\in T$, we have $f(b_1,\ldots,b_m)=\Tilde{\tau}f(r_1,\ldots,r_m)=0$.  This proves that every element of $T$ is a $V$-identity of $B$. This contradicts Proposition \ref{lemman}(1).
\end{proof}

\begin{theorem}\label{main theorem}
Algebra $P_2$ over an arbitrary field $\mathbb{F}$ does not have a finite basis of identities.
\end{theorem}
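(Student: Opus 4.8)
The plan is to argue by contradiction, assembling the machinery developed in Sections 4 and 5 together with the key Lemma \ref{lemma7}. Suppose that $P_2$ has a finite basis of identities. By Proposition \ref{sufficient and necessary conidition}, the ideal $T = T(E_0(P_2))$ of the associative algebra $E_0$ is then generated by elements of bounded degree; in particular there exists a finite (hence degree-bounded) generating set $\Sigma$ of $T$, so every polynomial in $\Sigma$ depends on at most some fixed number $s$ of variables. This is precisely the hypothesis that Lemma \ref{lemma7} refutes: Lemma \ref{lemma7} says that for every natural $s$ some member of $\Sigma$ must depend on more than $s$ variables. The contradiction proves the theorem.

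Spelling out the reduction a little more carefully: by Proposition \ref{cor}, modulo the defining identities (\ref{rsym}), (\ref{def1}), and (\ref{def2}) of $\mathcal{R}$, an arbitrary polynomial identity of $P_2$ is equivalent to a finite system of identities of the form $z\,g_i(x_1,\ldots,x_{m+3}) = 0$ with $g_i \in E_0$. Feeding this through Lemma \ref{lemma6}, a finite basis of identities for $P_2$ translates into a finite set $G \subseteq T$ such that $T$ is the ideal of $E_0$ generated by all linear substitutions $\varphi(g)$, $g \in G$; taking $\Sigma = \{\varphi(g) : g \in G,\ \varphi \text{ linear}\}$ and observing that linear substitutions do not increase the number of variables, we get a generating set $\Sigma$ of $T$ of bounded variable-count. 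This is the statement contradicted by Lemma \ref{lemma7}.

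The substantive content — and the part I expect to be the main obstacle — is already encapsulated in the earlier results, chiefly Proposition \ref{lemman} and Lemma \ref{lemma7}, so the proof of the theorem itself is short. The real work lies in the construction of the auxiliary algebras $B = L/N'$ of Proposition \ref{lemman}: one needs an algebra in $\mathcal{R}$, generated by $s+3$ elements, that fails the $V$-identity coming from Hall's identity applied to $E_0(P_2) \cong M_2(\mathbb{F})$ (property (1)), yet for which every $s$-element subset of the generators generates a subalgebra satisfying all $V$-identities of $P_2$ (property (2)). The tensor-product trick $A = H \otimes_{\mathbb{F}} P_3$, where $H$ is a free unital commutative-associative-like algebra on $n = s-5$ generators, together with the word-counting argument establishing $M \cap N \subseteq N'$, is what makes both properties hold simultaneously; this is the classical L'vov--Isaev strategy. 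Given those inputs, the final theorem follows immediately by the contradiction sketched above, so no further calculation is required here.

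\begin{proof}
Suppose, for contradiction, that $P_2$ has a finite basis of identities. By Proposition \ref{sufficient and necessary conidition}, the ideal $T = T(E_0(P_2))$ of $E_0$ is generated by polynomials of degrees at most some natural number $m$. Tracing through the equivalences of Proposition \ref{cor} and Lemma \ref{lemma6}, a finite basis of identities of $P_2$ yields a finite set $G \subseteq T$ of $V$-identities such that $T$ is the ideal of $E_0$ generated by all $\varphi(g)$ with $g \in G$ and $\varphi : X \to \mathbb{F}X$ linear. Since a linear substitution $\varphi$ does not increase the number of variables occurring in a polynomial, the set $\Sigma = \{\varphi(g) : g \in G,\ \varphi \text{ linear}\}$ is a generating set of $T$ consisting of polynomials each depending on at most $s$ variables, where $s$ is the maximum number of variables appearing in a member of the finite set $G$. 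This contradicts Lemma \ref{lemma7}, which asserts that for this $s$ some polynomial in $\Sigma$ depends on more than $s$ variables. Therefore $P_2$ does not have a finite basis of identities.
\end{proof}
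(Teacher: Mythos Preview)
Your overall strategy is identical to the paper's two-line proof: assume a finite basis, invoke Proposition~\ref{sufficient and necessary conidition} to get bounded-degree generators of $T$, and contradict Lemma~\ref{lemma7}. One step in your formal argument is false, however: a linear substitution $\varphi:X\to\mathbb{F}X$ \emph{can} increase the number of variables in a polynomial (for instance $\varphi(x_1)=x_1+x_3$, $\varphi(x_2)=x_2$ sends $V_{x_1,x_2}$ to $V_{x_1,x_2}+V_{x_3,x_2}$), so your set $\Sigma=\{\varphi(g):g\in G,\ \varphi\text{ linear}\}$ is \emph{not} a generating set in which every element has at most $s$ variables.

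The repair is easy and is implicit in the paper's phrasing. Proposition~\ref{sufficient and necessary conidition} gives generators of $T$ of degree at most some $m$. Every monomial in $E_0$ of degree $\le m$ involves at most $m$ distinct variables, and $T$ is closed under setting variables to zero; hence each generator $f$ decomposes as $f=\sum_{|I|\le m} f_I$ with $f_I\in T$ supported on the variable set $I$. Replacing each $f$ by its pieces $f_I$ yields a generating set of $T$ whose members each involve at most $m$ variables, which is what Lemma~\ref{lemma7} forbids. With this adjustment your proof matches the paper's exactly.
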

\begin{proof} If $P_2$ has a finite basis of identities, then the ideal $T=T(E_0(P_2))$ is generated by polynomials of bounded degree by Proposition \ref{sufficient and necessary conidition}. This contradicts Lemma \ref{lemma7}. 
\end{proof}

\section*{Acknowledgments}

The authors would like to thank the Max Planck Institute f\"ur Mathematik and the first author would like to thank Wayne State University for
their hospitality and excellent working conditions, where some part of this work has been done. 

The second author is supported by the grant AP09261086 of the Ministry of Science and Higher Education of the Republic of Kazakhstan.


\begin{thebibliography}{25}




\bibitem{Belkin1976} V. P. Belkin, {\em Varieties of right alternative algebras}, Algebra and Logic 15, (1976), 309--320.   


\bibitem{Belov2000} A. Ya. Belov, 
{\em Counterexamples to the Specht problem}, Sb. Math. 191 (2000), no. 3--4, 329--340. 



\bibitem{Burde2006} D. Burde, {\em Left-symmetric algebras, or pre-Lie algebras in geometry and physics}, Cent. Eur. J. Math. 4(3) (2006), 323--357.

\bibitem{Cayley1857} A. Cayley, {\em On the theory of analytical forms called trees}, Philos. Mag. 13 (1857) 172–176, Collected Math. Papers, vol. 3, University Press, Cambridge, 1890, 242--246.


\bibitem{ChapotonLivernet2001} F. Chapoton, M. Livernet, {\em Pre-Lie algebras and the rooted trees operad}, 
IMRN, (2001), 8, 395--408.


\bibitem{DIU2023} V. Dotsenko, N. Ismailov, U. Umirbaev {\em Polynomial identities of Novikov algebras},
Math. Z. 303(3), 60, (2023).


\bibitem{DotsenkoUmirbaev2023} V. Dotsenko, U. Umirbaev {\em An effective criterion for Nielsen-Schreier varieties}, IMRN, rnad092, https://doi.org/10.1093/imrn/rnad092.



\bibitem{Drensky1974} V. S. Drensky, {\em  On identities in Lie algebras}, Algebra and Logic 13,  (1974), 150--165.

\bibitem{Drensky-Zhakhaev2018}
V. Drensky, B.K. Zhakhayev,
{\em Noetherianity and Specht problem for varieties of bicommutative algebras},
J. Algebra 499 (2018), 570--582. 


\bibitem{DzT} A.S. Dzhumadil'daev, K.M. Tulenbaev, {\em Engel theorem for Novikov algebras}, Comm. Algebra 34 (2006),  3, 883--888.

\bibitem{Dzhum-Lof2002} 
A.S. Dzhumadil'daev, C. L\"ofwall, 
{\em  Trees, free
right-symmetric algebras, free Novikov algebras and identities},
Homology Homotopy Appl., 
4 (2002), no.2(1), 165--190.

\bibitem{Fil01} V.T. Filippov, {\em On right-symmetric and Novikov nil algebras of bounded index}, Math. Notes 70 (2001), 1--2,
 258--263.

\bibitem{Grishin1999} A.V. Grishin, 
{\em Examples of T-spaces and T-ideals of characteristic 2 without the finite basis property. (Russian. English, Russian summary)},
Fundam. Prikl. Mat. 5 (1999), no. 1, 101--118.

\bibitem{Iltyakov1992} A.V. Il'tyakov, {\em On finite basis of identities of Lie algebra representations}, Nova J. Algebra Geom. 1 (1992), no. 3, 207--259.

\bibitem{Isaev1986} I. M. Isaev, {\em Finite-dimensional right-alternative algebras generating non-finite-basable varieties},  Algebra and Logic 25, (1986), 86--96. 

\bibitem{Kemer1988}A. R. Kemer, 
{\em Solution of the problem as to whether associative algebras have a finite basis of identities}, 
Soviet Math. Dokl. 37 (1988),  1, 60--64. 

\bibitem{Kemer1991} A.R. Kemer, {\em Ideals of identities of associative algebras}.
Translated from the Russian by C. W. Kohls. Translations Mathematical Monographs, 87. American Mathematical Society, Providence, RI, 1991. vi+81 pp. 


\bibitem{KSh1988} A.N. Krasil'nikov, A.L. Shmel'kin, {\em Finiteness of the basis of identities of finite-dimensional representations of solvable Lie algebras}, Siberian Math. J. 29 (1988), 3, 395--402. 

\bibitem{KU2004} D.Kh. Kozybaev, U.U. Umirbaev, {\em The Magnus embedding for right-symmetric algebras},
  Siberian Math. J. 45 (3) (2004), 488--494.

\bibitem{KMU2008} D. Kozybaev, L. Makar-Limanov, U. Umirbaev {\em The Freiheitssatz and the autmorphisms of free right-symmetric algebras}, Asian-Eur. J. Math., 1(2), (2008), 243--254.

\bibitem{KU16} D. Kozybaev, U. Umirbaev, {\em Identities of the left-symmetric Witt algebras}, Internat. J. Algebra Comput. 26 (2016), 2, 435--450.

\bibitem{Kuz'min2008}  A.M. Kuz'min {\em On Spechtian varieties of right alternative algebras}, Journal of Mathematical Sciences,  149, (2008), 1098–-1106. 


\bibitem{Loday}  J.L. Loday, B. Vallette, {\em Algebraic operads}, 
Grundlehren Math. Wiss., 346 [Fundamental Principles of Mathematical Sciences]
Springer, Heidelberg, 2012, xxiv+634 pp.

\bibitem{Lvov1978} I.V. L'vov, {\em Finite-dimensional algebras with infinite identity base},  Sib Math. J. 19, (1978), 63--69.  


\bibitem{Medvedev1978} Yu. A. Medvedev, {\em  Finite basis theorem for varieties with a two-term identity}, Algebra and Logic 17,  (1978), 458--472.



\bibitem{Medvedev1980} Yu. A. Medvedev, {\em Example of a variety of alternative algebras over a field of characteristic 2 that does not have a finite basis of identities}, Algebra and Logic 19, (1980), 191--201. 


\bibitem{Pchelintsev2000} S. V. Pchelintsev, {\em An almost Spechtian variety of alternative algebras over a field of characteristic 3}, Mat. Sb., 191, No. 6, (2000), 909--925.

\bibitem{Shchigolev1999}
V.V. Shchigolev, 
Examples of infinitely based T-ideals. (Russian. English, Russian summary)
Fundam. Prikl. Mat. 5 (1999), 1, 307--312.

\bibitem{ShZh} I. Shestakov and Z. Zhang, {\em Solvability and nilpotency of Novikov algebras}, Comm. Algebra 48 (2020),  12, 5412--5420.



\bibitem{Segal1994}
Segal Dan,
{\em Free left-symmetric algebras and an analogue of the Poincaré-Birkhoff-Witt theorem,} 
J. Algebra, 
164(3) (1994), 750--772.

\bibitem{Specht1950}
W. Specht, 
Gesetze in Ringen. I. (German)
Math. Z. 52 (1950), 557--589.


\bibitem{TUZ21} K. Tulenbaev, U. Umirbaev, V. Zhelyabin, {\em On the Lie-solvability of Novikov algebras}, J. Algebra Appl. 22 (2023), no. 5, Paper No. 2350117, 13 pp. 



\bibitem{Umirbaev1984} U. U. Umirbaev, {\em Metabelian binary-lie algebras}, Algebra and Logic 23,  (1984), 155--160.

\bibitem{Umirbaev1985} U.U. Umirbaev {\em Specht varieties of soluble alternative algebras}, Algebra and Logic 24, (1985), 140--149.

\bibitem{UZh21} U. Umirbaev, V. Zhelyabin, {\em On the solvability of graded Novikov algebras}, Internat. J. Algebra Comput. 31 (2021),  7, 1405--1418.


\bibitem{Vaughan-Lee1970} M. R. Vaughan-Lee {\em Varieties of Lie algebras}, Q. J. Math., Oxford, 21(83)  (1970), 297--308. 



\bibitem{ZSSS1982} K.A. Zhevlakov, A.M. Slinko, I.P. Shestakov, A.I. Shirshov, {\em Rings That Are Nearly Associative}. Academic Press, New York, (1982).


\bibitem{Zel} E.I. Zel'manov, {\em A class of local translation-invariant Lie algebras}, Soviet Math. Dokl., Vol. 35, (1987), 216--218.
  



\end{thebibliography}
\end{document}